\documentclass[12pt,reqno,a4paper]{amsart}
\usepackage[top=1.15in, bottom=1.15in, left=1.17in, right=1.17in]{geometry}
\usepackage[T1]{fontenc}
\usepackage[utf8]{inputenc}
\usepackage{newtxtext}
\usepackage{newtxmath}
\usepackage{microtype}
\usepackage{ellipsis}
\usepackage{amssymb}
\usepackage{color,graphicx}
\usepackage{tikz}
\usepackage{xspace}
\usepackage{longtable}
\usepackage[inline]{enumitem}
\usepackage{booktabs}
\usepackage[normalem]{ulem}
\usepackage[english]{babel}
\usepackage[
    colorlinks,
    cite color=blue,
    pdfusetitle
    ]{hyperref}
\usepackage[noabbrev]{cleveref}
\usepackage{amsthm}
\usepackage{mathtools}

\theoremstyle{plain}
\newtheorem{theorem}{Theorem}[section]
\newtheorem{proposition}[theorem]{Proposition}
\newtheorem{lemma}[theorem]{Lemma}
\newtheorem{corollary}[theorem]{Corollary}
\theoremstyle{remark}

\newtheorem{example}[theorem]{Example}

\newtheorem{remark}[theorem]{Remark}

\theoremstyle{definition}
\newtheorem{definition}[theorem]{Definition}

\newcommand{\NN}{{\mathbb N}}
\newcommand{\N}{\NN}

\newcommand{\RR}{{\mathbb R}}
\newcommand{\R}{\RR}
\newcommand{\ZZ}{{\mathbb Z}}
\newcommand{\Z}{\ZZ}

\newcommand{\Dc}{{\mathcal D}}

\newcommand{\Pc}{{\mathcal P}}
\newcommand{\Oc}{{\mathcal O}}
\newcommand{\one}{{\mathbf 1}}

\newcommand{\norm}[1]{\lVert {#1} \rVert}
\newcommand{\llrr}{\norm{\,\cdot\,}}

\DeclareMathOperator{\Newt}{Newt}

\DeclareMathOperator{\conv}{conv}
\DeclareMathOperator{\cone}{cone}

\DeclareMathOperator{\Hom}{Hom}
\DeclareMathOperator{\AP}{AP}

\newcommand{\ie}{i.\,e.\ }

\renewcommand\emptyset\varnothing

\usetikzlibrary{arrows.meta}
\usetikzlibrary{calc}

\tikzset{>/.tip={Stealth}}
\tikzstyle{every picture}=[y={(60:1)}]
\tikzstyle{edge}=[thick]
\tikzstyle{grid}=[circle, fill=gray, inner sep=0, minimum size=2pt]
\tikzstyle{vertex}=[circle, fill, inner sep=0, minimum size=4pt]
\tikzstyle{shaded}=[fill=black!10]
\tikzstyle{boundary}=[lightgray]
\newcommand{\drawgrid}[1]{
  \foreach \x in {0,...,#1}
    \foreach \y in {\x,...,#1}
      \node[grid] at (\x,#1-\y) {};
}

\def\doubleunderline#1{\uuline{#1}}
\def\underline#1{\uline{#1}}

\begin{document}
    \title{Vertex-Maximal Lattice Polytopes Contained in 2-Simplices}

    \author[J.-P. Litza]{Jan-Philipp Litza}
    \author[C. Pegel]{Christoph Pegel}
    \author[K. Schmitz]{Kirsten Schmitz}

    \address{Department of Mathematics, University of Bremen, Germany}
    \email{jplitza@math.uni-bremen.de}
    \address{Institute for Algebra, Number Theory, and Discrete Mathematics, Leibniz University Hannover, Germany}
    \email{pegel@math.uni-hannover.de}
    \address{Department of Mathematics, University of Bremen, Germany}
    \email{kschmitz@math.uni-bremen.de}

    \begin{abstract}
Motivated by the problem of bounding the number of rays of plane tropical curves we study the following question: Given $n\in\N$ and a unimodular $2$-simplex $\Delta$ what is the maximal number of vertices a lattice polytope contained in $n\cdot \Delta$ can have? We determine this number for an infinite subset of $\N$ by providing a family of vertex-maximal polytopes and give bounds for the other cases.
\end{abstract}

    \maketitle
    
    \section{Introduction} \label{sec-introduction}

In \cite{JS} the authors study upper bounds for face numbers of tropical varieties of given degree and dimension. To our surprise, apart from their results very little seems to be known even about basic cases like plane tropical curves. In this note, we deal with the question of how many vertices a lattice polytope contained in a dilated unimodular $2$-simplex can have, thus providing bounds for the number of rays of plane tropical curves of a given degree.

Let $\one$ denote the vector $(1,1,1)$ in the lattice $\Z^3$ and denote by $\Lambda$ the lattice $\Z^3/\Z\one$.
Let $V=\Lambda\otimes_{\Z}\R$, which we will consider as a two-dimensional $\R$-vector space with lattice $\Lambda$. We want to study non-empty lattice polytopes in $V$ up to translation. Thus, we consider the equivalence class $[P]$ of a lattice polytope $P$ in $V$ with respect to the natural action of $\Lambda$ on $V$. We will denote the set of all these equivalence classes of non-empty lattice polytopes in $V$ by $\Pc$. When unambiguous, we will refer to an equivalence class of polytopes $[P]$ just as a polytope.

For two polytopes $[P], [Q]\in\Pc$ we will write $[P]\subset [Q]$ if there are representatives $P'\in[P]$ and $Q'\in[Q]$ such that $P'\subset Q'$. Furthermore, we define the Minkowski sum on $\Pc$ by $[P]+[Q]=[P+Q]$. Note that this is well defined, since a translation of a representative of one of the summands results in a translation of the sum. In particular, for $n\in\N$ let $n\cdot[P]=[n\cdot P]$ be the Minkowski sum of $n$ copies of $[P]$.

One polytope of particular importance will be the unimodular simplex $\Delta=\conv(a_1,a_2,a_3)$ with $a_1=\overline{0}$, $a_2=\overline{e_1}$ and $a_3=\overline{e_1}+\overline{e_2}$ as depicted in the left hand side of \Cref{fig:delta}.
The main question we are addressing in this article is the following: Given $n\in\mathbb N$, what is the maximal number of vertices a polytope $[P]\subset n\cdot[\Delta]$ can possibly have? We will refer to this number as $A(n)$ and call a polytope $[P]\subset n\cdot[\Delta]$ with $A(n)$ vertices \emph{$n$-vertex-maximal} or just \emph{vertex-maximal}. On the right hand side of \Cref{fig:delta}, we see a polytope $[P]$ with $6$ vertices contained in $4\cdot[\Delta]$. This polytope is indeed $4$-vertex-maximal, so $A(4)=6$.

In \Cref{sec-polytopeval}, we study the map $n\colon\Pc\to\N$ that assigns to a polytope $[P]$ the minimal $n\in\mathbb N$ such that $[P]\subseteq n\cdot[\Delta]$. We refer to $n([P])$ as the \emph{simplicial diameter} of $[P]$ and emphasize that this number depends on the choice of the simplex $[\Delta]$. Using this we can describe $A(n)$ as the largest $A\in\N$ such that there is a polytope $[P]$ with $n([P])\le n$ and $f_0([P])=A$, where $f_0$ denotes the number of vertices. We find that the simplicial diameter is a Minkowski-additive valuation and give an alternative description in terms of the edge-defining linear forms of $[P]$. Using these results we construct an infinite family of vertex-maximal polytopes in \Cref{sec-infinitefamily}. For $n\in\N$ not covered in \Cref{sec-infinitefamily} we determine bounds for $A(n)$ and examine additional properties of vertex-maximal polytopes in \Cref{sec-intermediate}. The application to tropical geometry is explained in \Cref{sec-related}, along with the recovery of the asymptotic behavior of $A(n)$ that has been studied in \cite{BP}.

\begin{figure}
    \begin{tikzpicture}[scale=1]
        \path[shaded,shift={(4,-2)}] (0,3) -- (2,2) -- (3,1) -- (2,0) -- (1,0) -- (0,2) -- cycle;
        \foreach \y in {-1,0,1}
            \foreach \x in {-2,...,8}
                \node[grid] at (\x-\y,2*\y) {};
        \foreach \y in {0,1}
            \foreach \x in {-2,...,9}
                \node[grid] at (\x-\y,2*\y-1) {};
        \draw[shaded]
            (0,0) node[left] {$a_1$}
            -- (1,0) node[right] {$a_2$}
            -- (0,1) node[above] {$a_3$}
            -- cycle;
        \draw[thick,->] (0,0) -- node[below,inner sep=1pt] (e1) {\tiny$\overline{e_1\!}$} (1,0);
        \draw[thick,->] (0,0) -- node[above right,inner sep=0pt] {\tiny$\overline{e_2\!}$} (-1,1);
        \draw[thick,->] (0,0) -- node[below right,inner sep=0pt] {\tiny$\overline{e_3\!}$} (0,-1);
        \node at (1/3,1/3) {$\Delta$};

        \begin{scope}[shift={(4,-2)}]
            \draw (0,0) -- (4,0) -- (0,4) -- cycle;
            \draw[edge] (0,3) -- (2,2) -- (3,1) -- (2,0) -- (1,0) -- (0,2) -- cycle;
        \end{scope}
    \end{tikzpicture}
    \caption{The unimodular simplex $\Delta$ and the projected standard basis $(\overline{e_1},\overline{e_2},\overline{e_3})$ in $V$ on the left. On the right is a lattice polytope $P$ with 6~vertices in a translated copy of $\Delta$, scaled by a factor of $n([P])=4$.}
    \label{fig:delta}
\end{figure}
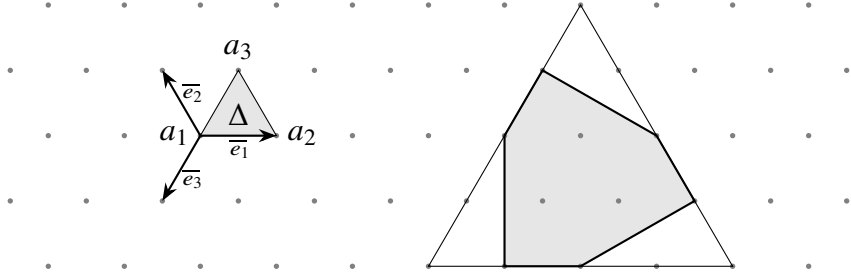
    \section{Simplicial Diameter as a Valuation} \label{sec-polytopeval}

To get a better understanding of the map $n\colon \Pc\to\N$, we use a correspondence of polytopes up to translation in $\Pc$ and certain vector configurations in the dual space $V^*$ that goes back to Minkowski \cite{Minkowski1897,Minkowski1903} and is detailed in \cite[Chapter~8]{Schneider}. This allows us to describe the simplicial diameter of a polytope in terms of its corresponding vector configuration, considerably simplifying the search for vertex-maximal polytopes.

\medskip

Letting $\Lambda^*=\Hom_{\Z}(\Lambda,\Z)$ be the dual lattice of $\Lambda$ we have a canonical isomorphism $V^*=\Hom_{\R}(V,\R)\cong \Lambda^*\otimes_{\Z} \R$.
Let $P\subset V$ be a lattice polytope. Using the notation as in \cite{GS}, for $v\in V^*\setminus\{0\}$ denote by $S(P,v)$ the face of $P$ defined by $v$, that is,
\begin{equation*}
    S(P,v) = \bigg\{\,x \in P : v(x)=\max_{y\in P} v(y) \,\bigg\}.
\end{equation*}
Note that for $[Q]=[P]$ we have $[S(P,v)]=[S(Q,v)]$ and hence $S([P],v)\coloneqq[S(P,v)]$ is well-defined.

We say a vector $x\in\Lambda$ is \emph{primitive} if whenever $x=\lambda y$ for $\lambda\in\N$ and $y\in\Lambda$, we have $\lambda=1$. The same applies to vectors in the dual lattice. Another notion we use is the \emph{lattice length} of elements of $\Lambda$ and $\Lambda^*$, as well as lattice length of line segments $[L]\in\Pc$. Any vector $x\in \Lambda\setminus\{0\}$ can be uniquely expressed as $x=\lambda\hat x$ for $n\in\N$ and a primitive vector $\hat x\in\Lambda$. In this situation we define the lattice length of $x$ as $\ell(x)=\lambda$ and furthermore $\ell(0)=0$. The lattice lengths of elements of $\Lambda^*$ is defined in the same way and for a line segment $[L]\in\Pc$ with $L=\conv(a,b)$ we let $\ell([L])=\ell(b-a)$.

We call a finite set $T=\{v_1,\dots,v_t\}$ of vectors in $V^*\setminus\{0\}$ a \emph{vector configuration} if $v_i\notin\cone(v_j)$ for $i\neq j$. That is, there are no two vectors in the same direction. The vector configuration is said to be \emph{balanced} if $\sum_{v\in T} v = 0$ and we denote by $\Dc$ the set of all balanced lattice vector configurations in $\Lambda^*\setminus\{0\}$.

For a polytope $[P]\in \Pc$ let $D([P])$ be the vector configuration consisting of all $v \in \Lambda^*\setminus\{0\}$ defining an edge $S([P],v)$ of lattice length $\ell(v)$ as illustrated in \Cref{fig:correspondence}. Note that $|D([P])|$ is equal to the number of vertices of $[P]$, denoted by $f_0([P])$, except for the point $[*]$, having one vertex but $|D([*])|=0$.

\begin{figure}
    \begin{tikzpicture}[scale=.7]
        \begin{scope}[xshift=-5cm]
            \path[shaded] (3,1) -- (3,0) -- (2,0) -- (1,1) -- (0,3) -- (0,4) -- (1,4) -- (2,3);
            \foreach \y in {0,1,2}
                \foreach \x in {1,...,4}
                    \node[grid] at (\x-\y,2*\y) {};
            \foreach \y in {0,1,2,3}
                \foreach \x in {2,...,4}
                    \node[grid] at (\x-\y,2*\y-1) {};
            \draw[edge] (3,1) -- (3,0) -- (2,0) -- (1,1) -- (0,3) -- (0,4) -- (1,4) -- (2,3);
            \draw[edge,green!80!black] (2,3) -- (3,1);
        \end{scope}
        \draw[|-{Computer Modern Rightarrow}] (-1,2) -- node[pos=.5, above] {$D$} (1,2);
        \begin{scope}[shift={(4,2)},rotate=90]
            \foreach \y in {-1,0,1}
                \foreach \x in {-2,...,2}
                    \node[grid] at (\x-\y,2*\y) {};
            \foreach \y in {0,1}
                \foreach \x in {-1,...,2}
                    \node[grid] at (\x-\y,2*\y-1) {};
            \begin{scope}[every path/.style={edge,->}]
                \draw[green!80!black] (0,0) -- (1,-2);
                \draw (0,0) -- (1,0);
                \draw (0,0) -- (1,-1);
                \draw (0,0) -- (0,-1);
                \draw (0,0) -- (-1,0);
                \draw (0,0) -- (-1,1);
                \draw (0,0) -- (-1,2);
                \draw (0,0) -- (0,1);
            \end{scope}
        \end{scope}
    \end{tikzpicture}
    \caption{A polytope $[P]\in\Pc$ on the left and the corresponding vector configuration $D([P])\in\Dc$ on the right. One edge and its corresponding outer normal vector are highlighted in green.}
    \label{fig:correspondence}
\end{figure}
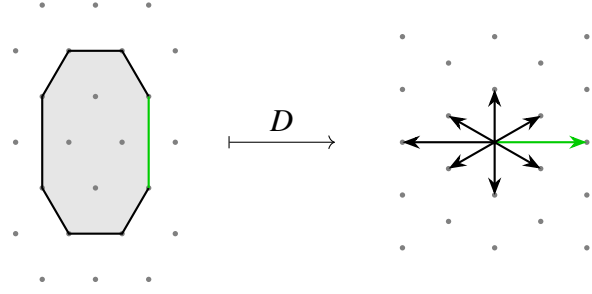

\begin{proposition}
    There is a bijection $D\colon \Pc \to \Dc$ given by $[P]\mapsto D([P])$.
\end{proposition}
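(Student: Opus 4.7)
The plan is to verify that $D([P]) \in \Dc$ for every $[P] \in \Pc$ and to construct an inverse by explicit reconstruction of a polygon from outer-normal data. The main tool is a $\Z$-linear isomorphism $R\colon \Lambda^* \to \Lambda$ realizing a $90°$-rotation relative to a fixed orientation of $V$; concretely, $R$ is determined by the unique alternating form $\omega$ generating $\det\Lambda^*$ via $\omega(u, R(v)) = v(u)$. This map sends a primitive outer normal in $\Lambda^*$ to the primitive direction in $\Lambda$ of the corresponding edge, and it preserves lattice length.

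First I would check that $D([P]) \in \Dc$. Each edge $E$ of a lattice polytope lies on an affine line whose normal direction is spanned by a unique primitive $\hat v_E \in \Lambda^*$, so $v_E \coloneqq \ell(E)\hat v_E$ is translation-invariant and hence depends only on $[P]$. Distinct edges yield distinct outer-normal rays, so $D([P])$ is a vector configuration. Balancedness follows by traversing $\partial P$ counterclockwise: the edge vectors $w_E = R(v_E) \in \Lambda$ satisfy $\sum_E w_E = 0$ since the boundary is a closed loop, hence $R\bigl(\sum_E v_E\bigr) = 0$ and so $\sum_E v_E = 0$ by invertibility of $R$. For injectivity, suppose $D([P]) = D([Q]) = \{v_1,\dots,v_t\}$; then $P$ and $Q$ share the same normal fan (determined by the cyclic order of the $\hat v_i$) and the same edge lengths $\ell(v_i)$, so fixing a vertex at the origin and walking the edges $R(v_i)$ in cyclic order reconstructs both polytopes, yielding $[P] = [Q]$.

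For surjectivity, given $T = \{v_1, \dots, v_t\} \in \Dc$, sort the $v_i$ counterclockwise by argument, set $e_i = R(v_i)$, and build the lattice polygonal path starting at the origin with successive edge vectors $e_1, \dots, e_t$. Balancedness yields $\sum_i e_i = R\bigl(\sum_i v_i\bigr) = 0$, so the path closes; cyclic monotonicity of the arguments of the $e_i$ (inherited from that of the $v_i$ under rotation by $R$) forces the successive edges to turn consistently, so the path bounds a convex lattice polygon $P$ whose outer normals are the $\hat v_i$ with corresponding edge lengths $\ell(v_i)$. Thus $D([P]) = T$.

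The main subtlety lies in the degenerate low-dimensional cases. The point $[*]$ pairs with the empty configuration, vacuously balanced and recovered by the trivial walk. A segment $[L]$ of lattice length $k$ with primitive direction $u \in \Lambda$ pairs with $\{k\hat v, -k\hat v\}$ for $\hat v \in \Lambda^*$ primitive and vanishing on $u$; here both ``edges'' refer to the segment itself viewed with opposite outer normals. One should also confirm that the inverse construction is independent of the orientation choice: reversing orientation simultaneously reverses the cyclic sort and the rotation direction, yielding the same polygon up to translation, so the bijection $D$ itself is canonical.
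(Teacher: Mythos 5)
Your proof is correct, but it takes a genuinely different route from the paper. The paper's proof is essentially a reduction-plus-citation: it splits by the dimension of $[P]$ (equivalently, of the span of the configuration), treats $d=0$ and $d=1$ by direct inspection, and for $d=2$ invokes Minkowski's correspondence between full-dimensional polytopes up to translation and their facet normals with facet volumes, as stated in Schneider's book. You instead give the explicit two-dimensional construction underlying that theorem: the unimodular alternating form yields a lattice isomorphism $R\colon\Lambda^*\to\Lambda$ rotating outer normals into edge directions, balancedness becomes the closing-up of the boundary walk, and surjectivity follows by sorting a balanced configuration cyclically and concatenating the rotated vectors into a convex lattice polygon. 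What your approach buys is a self-contained, elementary, and constructive argument (and an explicit inverse to $D$), at the cost of being specific to dimension two and of some glossed details --- chiefly, that for a configuration with two-dimensional span the balancing condition forces consecutive angular gaps to be less than $\pi$, which is what guarantees the sorted edge path is locally convex and winds exactly once, hence bounds a convex polygon rather than a degenerate or self-overlapping path. The paper's citation is shorter and dimension-agnostic in principle; your argument makes the two-dimensional mechanics visible, which is arguably more in the spirit of how the correspondence is used in the rest of the paper (e.g.\ in \Cref{lem:corres_mink_sum}). Your handling of the degenerate cases $[*]\leftrightarrow\emptyset$ and segments $\leftrightarrow\{k\hat v,-k\hat v\}$ matches the paper's.
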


\begin{proof}
    Note that the dimensions of $[P]$ and of the linear hull of $D([P])$ are equal, so for each dimension $d=0,1,2$ we show that translation classes of $d$-dimensional polytopes in $\Pc$ are in bijection with vector configurations in $\Dc$ spanning a subspace of dimension $d$. For $d=0$, the point $[*]$ is the only $0$-dimensional polytope in $\Pc$ and $D([*])=\emptyset$ is the only configuration in $\Dc$ with $0$-dimensional span. The configurations in $\Dc$ with $1$-dimensional span are exactly $\{\lambda \hat{v},-\lambda \hat{v}\}$ for primitive $\hat{v}\in\Lambda^*$ and $\lambda\in \N\backslash\{0\}$. A configuration $\{\lambda \hat{v},-\lambda \hat{v}\}\in\Dc$ corresponds to a line segment $[L]$ of lattice length $\lambda$ with $L\subset \ker(v)$. 

    For $d=2$ this is the $2$-dimensional case of Minkowski's correspondence between full-dimensional polytopes up to translation and their facet normal directions together with facet volumes as stated in \cite[Theorem~8.1.1, Thm.~8.2.1]{Schneider}.
\end{proof}

We will equip the real vector space $V^*$ with an asymmetrical norm, referring to \cite[Section~2.2.2]{COB} for details. Recall that $a_1$, $a_2$, $a_3$ are the vertices of the unimodular simplex $\Delta\subset V$. For pairwise distinct $i,j,k\in\{1,2,3\}$ let $b_i\in V^*$ be the unique primitive linear form such that $S(\Delta, b_i) = \conv(a_j, a_k)$. Note that we have $\{b_1,b_2,b_3\}=D([\Delta])$.
In $V^*$ consider the lattice polytope $K=\conv(b_1,b_2,b_3)$.
Since $K$ contains $0$ in its interior, it is an absorbing subset of $V^*$, meaning that for each $v\in V^*$ there is some $\lambda\ge 0$ such that $v\in\lambda\cdot K$. Thus, we obtain a Minkowski functional
\begin{align*}
    \llrr\colon V^* & \longrightarrow \R_{\ge0},\\
    v & \longmapsto \inf\left\{\,\lambda\in\R_{\ge0} : v\in \lambda\cdot K\,\right\}.
\end{align*}

As $K$ is also convex, the map $\llrr$ is an asymmetric norm, \ie it is positive definite, positive homogeneous and satisfies the triangle inequality.
In \Cref{fig:norm} we illustrate the integral level sets of $\llrr$ together with the face fan of $K$, which is a complete fan with three $2$-dimensional cones $C_i=\cone(b_j,b_k)$ for pairwise distinct $i,j,k\in\{1,2,3\}$.

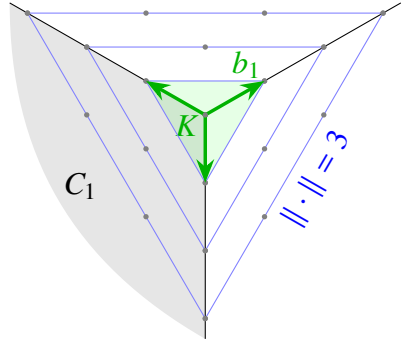
\begin{figure}
    \begin{tikzpicture}[rotate=90,scale=.9]
        \newcommand{\gridlimit}{3}
        \path[shaded] (0,0) -- (-\gridlimit-0.3,0) to[bend left] (0,\gridlimit+0.3) -- cycle;
        \node at (-2.1,2.1) {$C_1$};
        
        \draw
            (0,0) -- (0,\gridlimit+0.3)
            (0,0) -- (\gridlimit+0.3,-\gridlimit-0.3)
            (0,0) -- (-\gridlimit-0.3,0);
            
        \path[fill=green,opacity=.1] (0,1) -- (-1,0) -- (1,-1) -- cycle;
        \node[green!70!black] at (-.3,.3) {$K$};
        
        \foreach \x in {0,...,\gridlimit}
            \draw[blue!50] (0,\x) -- (-\x,0) -- (\x,-\x) -- cycle;
        \tikzstyle{primitive}=[fill=red,vertex]
        \node[blue,rotate=60,anchor=north] at (0,-1.5) {$\llrr = 3$};

        \foreach \x in {0,...,\gridlimit}
            \foreach \y in {\x,...,\gridlimit} {
                \pgfmathsetmacro{\gridstyle}{gcd(\x, \gridlimit-\y) == 1 ? "primitive" : ""}
                \tikzstyle{primgrid}=[grid,\gridstyle]
                \node[grid,x={(-1,0)}] at (\x,\gridlimit-\y) {};
                \node[grid,x={(-1,0)},y={(-1,-1)}] at (\x,\gridlimit-\y) {};
                \node[grid,x={(1,-1)}] at (\x,\gridlimit-\y) {};
            }
        
        \begin{scope}[->,green!70!black,line width=1.3pt]
            \draw (0,0) -- (0,1);
            \draw (0,0) -- (-1,0);
            \draw (0,0) -- (1,-1);
        \end{scope}
        \node[green!70!black,above left,inner sep=1pt] at (1,-1) {$b_1$};
        
        \node[grid] at (0,0) {};
    \end{tikzpicture}
    \caption{The vectors \(b_1\), \(b_2\) and \(b_3\) spanning the absorbing set $K$ together with its face fan and the integral level sets of $\llrr$.}
    \label{fig:norm}
\end{figure}

\begin{lemma} \label{lem:norm-linear}
    On each cone $C_i=\cone(b_j, b_k)$, we have $\llrr=b_j^*+b_k^*$, where $b_j^*$, $b_k^*$ is the dual basis of $b_j$, $b_k$. In particular $\llrr$ is linear on each cone.
\end{lemma}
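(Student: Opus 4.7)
The plan is to identify the slice $K \cap C_i$ explicitly in coordinates adapted to the cone, then read off the Minkowski functional by positive homogeneity. The single structural input needed is the balance condition $b_1 + b_2 + b_3 = 0$, which holds because $\{b_1, b_2, b_3\} = D([\Delta])$ belongs to $\Dc$.

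First I would fix $\{b_j, b_k\}$ as a basis of $V^*$. These are linearly independent since $D([\Delta])$ is a vector configuration, so no two of its elements lie on a common ray, and in a two-dimensional space this forces independence. The balance relation then yields $b_i = -b_j - b_k$. In the resulting coordinates $C_i$ is exactly the non-negative quadrant, and the linear form $b_j^* + b_k^*$ sends $\alpha b_j + \beta b_k$ to $\alpha + \beta$.

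The heart of the argument is to show $K \cap C_i = \conv(0, b_j, b_k)$. Writing an arbitrary point of $K$ as $\sum_\ell \mu_\ell b_\ell$ with $\mu_\ell \geq 0$ and $\sum_\ell \mu_\ell = 1$, and substituting $b_i = -b_j - b_k$, the $\{b_j, b_k\}$-coordinates become $(\mu_j - \mu_i,\, \mu_k - \mu_i)$, which sum to $1 - 3\mu_i \leq 1$. The two inclusions then correspond to the following: a point of $K$ lies in $C_i$ precisely when both coordinates are non-negative, in which case their sum is at most $1$; conversely, given $\alpha, \beta \geq 0$ with $\alpha + \beta \leq 1$, the choice $\mu_i = (1 - \alpha - \beta)/3$, $\mu_j = \mu_i + \alpha$, $\mu_k = \mu_i + \beta$ exhibits $\alpha b_j + \beta b_k$ as a convex combination of the $b_\ell$.

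Once this description of $K \cap C_i$ is in hand, for any $v \in C_i$ the condition $v \in \lambda K$ unpacks as $v/\lambda \in K \cap C_i$, i.e. $(b_j^* + b_k^*)(v) \leq \lambda$; taking the infimum yields $\llrr(v) = (b_j^* + b_k^*)(v)$, and linearity on $C_i$ follows at once. I expect no genuine obstacle: the only point needing care is the verification of both inclusions defining $K \cap C_i$, which the substitution $b_i = -b_j - b_k$ reduces to a one-line computation.
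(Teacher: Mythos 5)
Your proof is correct and follows essentially the same route as the paper's: both reduce to the identity $\lambda K\cap C_i=\conv(0,\lambda b_j,\lambda b_k)$ and then read off the Minkowski functional as $b_j^*+b_k^*$; you additionally verify this slice identity via the balancing relation $b_1+b_2+b_3=0$, a step the paper simply asserts. One small inaccuracy worth fixing: the vector-configuration condition (no two vectors on a common ray) does not by itself force linear independence of $b_j,b_k$ in a two-dimensional space, since antiparallel vectors are allowed; independence instead follows from $\Delta$ being two-dimensional, so that $D([\Delta])$ spans $V^*$ and, by the relation $b_i=-b_j-b_k$, no two of the three normals can be proportional.
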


\begin{proof}
    Without loss of generality we can consider $C_1=\cone(b_2,b_3)$. For $v\in C_1$ and $\lambda\in\R_{\geq0}$ we have $v\in \lambda K$ if and only if $v\in\conv(0,\lambda b_2,\lambda b_3)$ if and only if $v=c_2b_2+c_3b_3$ for some $c_2,c_3\geq0$ with $c_2+c_3\leq \lambda$. Hence, $\norm{ v}=\lambda$ if and only if $v=c_2b_2+c_3b_3$ with $c_2+c_3= \lambda$.
    Thus, letting $b_2^*$, $b_3^*$ be the dual basis of $b_2$, $b_3$, we have $\norm{v}=(b_2^*+b_3^*)(v)$ for $v\in C_1$.
\end{proof}

Using this asymmetric norm, we can associate another invariant to polytopes in $\Pc$, which will turn out to coincide with the simplicial diameter:
\begin{equation*}
\begin{split}
    m\colon \Pc &\longrightarrow \R_{\ge 0},\\
    [P] &\longmapsto \tfrac13\smashoperator{\sum_{v\in D([P])}} \norm{ v }.
\end{split}
\end{equation*}

We want to show that the map $m$ is Minkowski-additive, \ie it satisfies $m([P]+[Q])=m([P])+m([Q])$. This will then imply that it is a valuation which allows us to show that it agrees with the simplicial diameter. We start by translating Minkowski sums of polytopes to an operation on balanced vector configurations.

\begin{lemma}
    \label{lem:corres_mink_sum}
    Given two polytopes $[P]$, $[Q]$, we have
    \begin{equation*}
        D([P]+[Q]) = D([P]) \boxplus D([Q]),
    \end{equation*}
    where $T\boxplus U$ denotes the union $T\cup U$ with vectors sharing a direction replaced by their sum:
    \begin{align*}
        T \boxplus U
        &= \left\{\, v + w : v\in T,\ w\in U,\ \cone(v)=\cone(w)\,\right\}\\
        &\qquad\cup \left\{\,v \in T : \cone(v)\cap U=\emptyset\,\right\}\\
        &\qquad\cup \left\{\,w \in U : \cone(w)\cap T=\emptyset\,\right\}.
    \end{align*}
\end{lemma}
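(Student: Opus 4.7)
My approach is to reduce the identity to two standard facts: the face-of-sum identity
\[
  S(P+Q, v) = S(P, v) + S(Q, v)
\]
for every nonzero $v \in V^*$, and additivity of lattice length for parallel lattice segments. The first follows directly from $v(p+q) = v(p) + v(q)$ and maximizing $v$ over $P+Q$. The second follows from $\ell(x+y) = \ell(x) + \ell(y)$ whenever $x,y \in \Lambda$ are non-negative multiples of the same primitive vector; this applies to two lattice segments sharing a common outer normal $\hat v$, since in $V$ both are then perpendicular to $\hat v$ and hence parallel.

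Using the face-of-sum identity, for a primitive $\hat v \in \Lambda^*$ the face $S(P+Q, \hat v)$ is an edge precisely when at least one of $S(P,\hat v)$, $S(Q,\hat v)$ is an edge; if only one is, the other is a vertex and merely translates it. In every such case the lattice length is
\[
  \ell\bigl(S(P+Q,\hat v)\bigr) \;=\; \ell\bigl(S(P,\hat v)\bigr) + \ell\bigl(S(Q,\hat v)\bigr),
\]
where a vertex is understood to contribute length~$0$. Hence the vector in $D([P]+[Q])$ attached to the ray $\cone(\hat v)$ is $\lambda_P \hat v + \lambda_Q \hat v$ when both summands have edges there, and is $\lambda_P \hat v$ or $\lambda_Q \hat v$ when only one does. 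Matching these three cases against the three clauses in the definition of $\boxplus$ gives the claimed equality.

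The main care lies in the degenerate translation classes, since $D$ is a bijection on all of $\Pc$ and not just on full-dimensional polytopes. When one summand is a point, both sides trivially reduce to $D$ of the other; when both are segments in the same direction, one must verify that the two $\pm \hat v$ pairs from $D([P])$ and $D([Q])$ combine into the single pair $\pm(\lambda_P + \lambda_Q)\hat v$ that corresponds to the longer segment $[P]+[Q]$. I do not anticipate any serious obstacle here: once the face identity and length-additivity are in hand, what remains is case bookkeeping, and the only subtlety worth flagging is making sure the argument is phrased so as to handle the low-dimensional classes uniformly with the full-dimensional one.
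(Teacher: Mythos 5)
Your proof is correct and follows essentially the same route as the paper's: both rest on the face decomposition $S(P+Q,\hat v)=S(P,\hat v)+S(Q,\hat v)$ and the additivity of lattice length for parallel segments, followed by a case analysis matching the three clauses of $\boxplus$. The paper organizes this as two set inclusions rather than a ray-by-ray match, but the content is the same, and your explicit attention to the low-dimensional translation classes is a harmless extra check.
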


\begin{proof}
    For the inclusion $D([P]+[Q])\subset D([P])\boxplus D([Q])$, let $v\in D([P]+[Q])$ so that $v=\lambda \hat v$ for an edge $S([P]+[Q],\hat v)$ of lattice length $\lambda$. Note that $S([P]+[Q],\hat v)$ decomposes as the Minkowski sum 
    \begin{equation} \label{eq:S-minkowski}
        S([P]+[Q], \hat v) = S([P], \hat v) + S([Q], \hat v)
    \end{equation}
    as stated in \cite[Lemma~2.1.4]{GS}. 
    Since $S([P]+[Q], \hat v)$ is an edge, we are in one of the following situations: Either the two summands are parallel edges or one summand is an edge and the other is a vertex. If the summands are parallel edges, their lattice lengths add up so that $v=\lambda\hat v = \mu\hat v+\nu\hat v$ for $\mu\hat v\in D([P])$ and $\nu\hat v\in D([Q])$. If, without loss of generality, $S([P],\hat v)$ is an edge and $S([Q],\hat v)$ is a vertex , we have $\ell(S([P+Q],\hat v)) = \ell(S([P],\hat v))$ so that $v\in D([P])$ and furthermore $\cone(v)\cap D([Q])=\emptyset$, since for any $w\in\cone(v)$ we have $S([Q],w)=S([Q],\hat v)$, which is not an edge. In both cases, we conclude $v\in D([P])\boxplus D([Q])$.

    For the other inclusion, first consider $v+w\in D([P])\boxplus D([Q])$ where $v\in D([P])$, $w\in D([Q])$ and $\cone(v)=\cone(w)$. Then $v=\mu\hat v$ and $w=\nu\hat v$ for a primitive $\hat v$ and $\mu,\nu\in\N$. Hence, $S([P], \hat v)$ and $S([Q],\hat v)$ are parallel edges of lattice lengths $\mu$ and $\nu$, respectively. Their Minkowski sum is the edge $S([P]+[Q],\hat v)$ of $[P]+[Q]$ with lattice length $\lambda=\mu+\nu$ by \eqref{eq:S-minkowski}. Thus, we have $v+w=\lambda\hat v\in D([P]+[Q])$. At last, consider $v\in D([P])\boxplus D([Q])$ such that---without loss of generality---$v\in D([P])$ and $\cone(v)\cap D([Q])=\emptyset$. Writing $v=\lambda\hat v$, we see that $S([Q],\hat v)$ is a vertex and $S([P],\hat v)$ an edge of lattice length $\lambda$. Using \eqref{eq:S-minkowski}, we conclude that $S([P]+[Q],\hat v)$ is an edge of lattice length $\lambda$ as well so that $v\in D([P]+[Q])$.
\end{proof}

\begin{lemma}
    The map $m$ is Minkowski-additive, that is, for polytopes $[P]$ and $[Q]$ we have $m([P+Q]) = m([P])+m([Q])$. In particular, $m$ is a polytope valuation.
\end{lemma}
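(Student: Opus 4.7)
The plan is to chain together \Cref{lem:corres_mink_sum} with the positive homogeneity of $\llrr$, since the ingredients for Minkowski-additivity of $m$ are essentially already assembled.

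First I would unfold the definition: $m([P]+[Q]) = \tfrac{1}{3}\sum_{x \in D([P]+[Q])} \norm{x}$, and replace $D([P]+[Q])$ by $D([P]) \boxplus D([Q])$ using \Cref{lem:corres_mink_sum}. Writing $T = D([P])$ and $U = D([Q])$, the sum then splits along the three pieces of the description of $\boxplus$:
\begin{equation*}
    \sum_{x \in T \boxplus U} \norm{x}
    = \smashoperator{\sum_{\substack{v \in T,\, w \in U \\ \cone(v)=\cone(w)}}} \norm{v+w}
    \;+\; \smashoperator{\sum_{\substack{v \in T \\ \cone(v) \cap U = \emptyset}}} \norm{v}
    \;+\; \smashoperator{\sum_{\substack{w \in U \\ \cone(w) \cap T = \emptyset}}} \norm{w}.
\end{equation*}

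The key observation is that the merged contributions in the first sum are additive: whenever $\cone(v)=\cone(w)$, the vectors $v$ and $w$ are positive multiples of a common primitive direction $\hat v$, so $v = \mu \hat v$, $w = \nu \hat v$, and by positive homogeneity of $\norm{\,\cdot\,}$ we have $\norm{v+w} = (\mu+\nu)\norm{\hat v} = \norm{v} + \norm{w}$. (Alternatively, $v$ and $w$ lie in a common cone $C_i$, and one could invoke \Cref{lem:norm-linear} directly.) Substituting this identity and regrouping the three sums gives $\sum_{v \in T} \norm{v} + \sum_{w \in U} \norm{w}$, so dividing by $3$ yields $m([P]) + m([Q])$.

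For the "in particular" clause, the point is that a translation-invariant, Minkowski-additive function on convex polytopes is automatically a valuation in the inclusion–exclusion sense; this is a standard fact from the theory of translation-invariant valuations on convex bodies (see, e.g., \cite[Chapter~6]{Schneider}), and $m$ is well-defined on $\Pc$ precisely because it is translation-invariant. I do not anticipate a genuine obstacle: the case analysis coming from $\boxplus$ is the only place care is needed, and the fact that $\norm{\,\cdot\,}$ is \emph{linear} along rays (not merely subadditive) is exactly what makes the merging step exact rather than just an inequality.
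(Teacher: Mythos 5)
Your proof is correct and takes essentially the same route as the paper: the additivity computation you spell out is exactly what the paper summarizes as ``immediate from \Cref{lem:corres_mink_sum} and positive homogeneity,'' the key point in both cases being that $\llrr$ is genuinely additive (not just subadditive) on vectors spanning the same ray. For the valuation claim, the paper makes your cited ``standard fact'' explicit via Sallee's identity $P+Q=(P\cup Q)+(P\cap Q)$ for convex $P\cup Q$, which is the same underlying argument you defer to the literature.
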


\begin{proof}
    The additivity follows immediately from \Cref{lem:corres_mink_sum} and the positive homogeneity of $\llrr$. Let $P$ and $Q$ be polytopes in the usual sense, such that $P\cup Q$ is again a polytope. From the identity
    \begin{equation*}
        P + Q = P \cup Q + P \cap Q
    \end{equation*}
    due to \cite{Sallee} together with Minkowski-additivity, we obtain
    \[
    m([P])+m([Q]) = m([P+Q]) = m([P\cup Q+P\cap Q]) =m([P\cup Q])+m([P\cap Q]).
    \]
    Hence, any Minkowski-additive map on $\Pc$, in particular $m$, is in fact a polytope valuation.
\end{proof}

\begin{theorem}\label{thm:sum-equiv}
    For every polytope $[P]\in\Pc$, we have $n([P])=m([P])$.
\end{theorem}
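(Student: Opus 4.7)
The plan is to derive a common support-function formula for $n([P])$ and $m([P])$, then to identify them via the symmetry of a $2$-dimensional mixed area pairing.

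First I will show
\begin{equation*}
    n([P]) = h_P(b_1)+h_P(b_2)+h_P(b_3),
\end{equation*}
where $h_P$ is the support function of $P$. For any fixed position of $\Delta$ the polytope $n\cdot\Delta$ is cut out by the three inequalities $b_i(x)\le n\,h_\Delta(b_i)$, and since $b_1+b_2+b_3=0$, the quantity $\sum_i h_\Delta(b_i)$ equals $1$ independently of position. The inclusion $[P]\subseteq n\cdot[\Delta]$ thus amounts to finding $t\in V$ satisfying $h_P(b_i)+b_i(t)\le n\,h_\Delta(b_i)$ for $i=1,2,3$. Summing yields the necessary bound $n\ge \sum_i h_P(b_i)$, and conversely a suitable $t$ is found by forcing two of the inequalities to equality (the third then holds automatically because $\sum_i b_i=0$).

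Next I will identify $\llrr$ with a support function. The polar body
\begin{equation*}
    K^\circ = \{\,x\in V : b_i(x)\le 1 \text{ for } i=1,2,3\,\}
\end{equation*}
has the same normal fan as $\Delta$ and edge lattice lengths all equal to $3$, so it is a translate of $3\Delta$; write $K^\circ = 3\Delta - t$ for some fixed $t\in V$. Since $\llrr$ is the Minkowski functional of $K$, it coincides with $h_{K^\circ}$, so $\norm{v} = 3\,h_\Delta(v) - v(t)$ on all of $V^*$. This is consistent with \Cref{lem:norm-linear} because the cones $C_1,C_2,C_3$ are precisely the vertex normal cones of $\Delta$. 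Because $D([P])$ is balanced, the translation term vanishes on summing:
\begin{equation*}
    \smashoperator{\sum_{v\in D([P])}}\norm{v}
    = 3\smashoperator{\sum_{v\in D([P])}} h_\Delta(v) - t\Bigl(\smashoperator{\sum_{v\in D([P])}} v\Bigr)
    = 3\smashoperator{\sum_{v\in D([P])}} h_\Delta(v),
\end{equation*}
so $m([P]) = \sum_{v\in D([P])} h_\Delta(v)$.

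It then remains to prove
\begin{equation*}
    \smashoperator{\sum_{v\in D([P])}} h_\Delta(v) = h_P(b_1)+h_P(b_2)+h_P(b_3).
\end{equation*}
Both sides are Minkowski-additive in $P$: the left side by \Cref{lem:corres_mink_sum} together with the positive homogeneity of $h_\Delta$ along rays, the right side from $h_{P+Q}=h_P+h_Q$. Moreover, both compute twice the $2$-dimensional mixed area $V(P,\Delta)$ edgewise: summing $\ell(F)\,h_\Delta(\hat n_F)$ over the edges $F$ of $P$ gives the left-hand side, while summing $\ell(G)\,h_P(\hat n_G)$ over the edges $G$ of $\Delta$ gives the right-hand side (using that $D([\Delta])=\{b_1,b_2,b_3\}$ with each $b_i$ primitive of lattice length one). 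The symmetry $V(P,\Delta)=V(\Delta,P)$ then yields the identity, and combining all three steps gives $n([P])=m([P])$. This final symmetry is where I expect the main obstacle to lie; if one prefers to avoid invoking mixed volumes, it can instead be established intrinsically by verifying the Minkowski-bilinear identity directly on lattice segments via the symplectic pairing on $\Lambda\times\Lambda^*$ and extending through the Minkowski decomposition of $2$-dimensional lattice polygons into indecomposable summands.
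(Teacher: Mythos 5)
Your proof is correct, but it takes a genuinely different route from the paper. The paper proves \Cref{thm:sum-equiv} by a direct geometric argument: it subdivides a representative $P\subset n\cdot\Delta$ and the ambient simplex into pieces $Q_0,Q_1,Q_2,Q_3$ and $Q_0,S_1,S_2,S_3$ meeting along segments $\conv(x_j,x_k)$, and uses the valuation property of $m$ together with the cone-wise linearity of $\llrr$ (\Cref{lem:norm-linear}) and the balancing condition to show $m([Q_i])=m([S_i])$, hence $m([P])=m([n\cdot\Delta])=n$. You instead prove the support-function formula $n([P])=\sum_i h_P(b_i)$ first (the paper only records this afterwards as \cref{eq:sumofmax}, and it is indeed elementary -- though you should note that the translate $t$ you construct is automatically a lattice vector, since $b_1,b_2$ form a basis of $\Lambda^*$ and the prescribed values $n\,h_\Delta(b_i)-h_P(b_i)$ are integers, which is needed because containment in $\Pc$ is defined via lattice translations), then identify $\llrr$ with $h_{K^\circ}=3h_\Delta-t$ by polarity so that balancing kills the translation term and $m([P])=\sum_{v\in D([P])}h_\Delta(v)$, and finally close the loop with the symmetry $V(P,\Delta)=V(\Delta,P)$ of the mixed area, for which the edgewise formula $2V(P,Q)=\sum_F\ell(F)\,h_Q(\hat n_F)$ in lattice normalization is exactly what both sides compute. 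What your approach buys is conceptual clarity: it exposes $n(\cdot)$ and $m(\cdot)$ as the two sides of one classical bilinear symmetry and makes the Minkowski-additivity transparent; what it costs is the import of mixed-volume machinery (or, in your sketched alternative, a verification on indecomposable summands that you have not actually carried out and should not treat as automatic). The paper's subdivision argument stays entirely inside the elementary toolkit it has already built and needs only \Cref{lem:norm-linear} and the valuation property. Both proofs are sound.
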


\begin{proof}
    Let $n=n([P])$ and choose a representative $P$ of $[P]$ that is contained in $n\cdot\Delta$. We subdivide the polytope $P$ and the encompassing simplex $n\cdot\Delta$ as indicated in \Cref{fig:simplex_subdiv}:
    For pairwise distinct $i,j,k\in\{1,2,3\}$, choose a lattice point $x_i\in P$ on the edge $n\cdot \conv(a_j,a_k)$ of $n\cdot \Delta$. Such a point exists by minimality of $n$ such that $[P]\subset n\cdot[\Delta]$. We then let
        $Q_0 = \conv\{x_1,x_2,x_3\}$,
        $S_i = \conv\{n\,a_i,x_j,x_k\}$ and
        $Q_i = S_i \cap P$.
    This way, $Q_0$, $S_1$, $S_2$ and $S_3$ form a subdivision of $n\cdot\Delta$. Intersecting the cells with $P$, we obtain a subdivision of $P$ into $Q_0$, $Q_1$, $Q_2$ and $Q_3$.

    \begin{figure}
        \begin{tikzpicture}[scale=.5]
            \draw[red]
                (0,6) -- (0,0) node[below left, black] {$n\,a_1$} -- (5,0)
                (0,6) -- (0,10) node[above, black] {$n\,a_3$} -- (4,6)
                (4,6) -- (10,0) node[below right, black] {$n\,a_2$} -- (5,0);
            \draw[blue]
                (0,6) -- (1,3) -- (3,1) -- (5,0)
                (5,0) -- (6,0) -- (7,1) -- (7,2) -- (5,5) -- (4,6)
                (4,6) -- (2,7) -- (1,7) -- (0,6);
            \draw[blue,dash pattern=on 3pt off 7pt,postaction={draw,red,dash pattern=on 3pt off 7pt,dash phase=5pt}]
                (0,6) node[above left,black] {$x_2$}
                -- (5,0) node[below,black] {$x_3$}
                -- (4,6) node[above right,black] {$x_1$}
                -- cycle;

            \node[red] at (1,1) {$S_1$};
            \node[red] at (8,1) {$S_2$};
            \node[red] at (1,8) {$S_3$};
            \node at (3,4) {$Q_0$};
            \node[blue] at (2.4,2.4) {$Q_1$};
            \node[blue] at (6,2) {$Q_2$};
            \node[blue] at (1.75,6.5) {$Q_3$};
        \end{tikzpicture}
        \caption{Subdivision of the polytope $P=Q_0\cup \textcolor{blue}{Q_1\cup Q_2 \cup Q_3}$ and the
        encompassing simplex $n\cdot\Delta = Q_0\cup \textcolor{red}{S_1\cup S_2\cup S_3}$ as used in the proof of \Cref{thm:sum-equiv}.}
        \label{fig:simplex_subdiv}
    \end{figure}
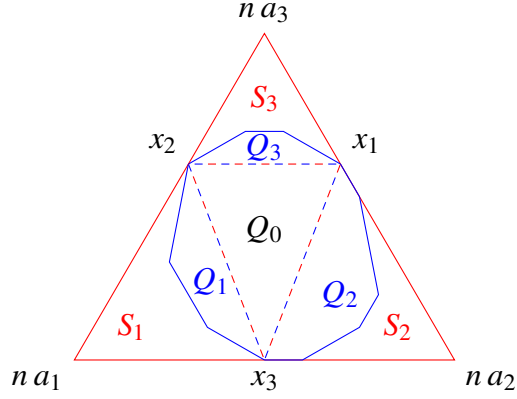

    Using the valuation property, we have
    \begin{equation}
        m([P]) = m([Q_1]) + m([Q_0\cup Q_2\cup Q_3]) - m([Q_1\cap(Q_0\cup Q_2\cup Q_3)]).
        \label{eq:polytope_subdiv}
    \end{equation}
    We claim that $m([Q_1])=m([S_1])$. If $x_2=x_3=n\,a_1$, we have $S_1=Q_1=\{n\, a_1\}$ and the claim is trivial. If $x_2\neq x_3$, the polytopes $Q_1$ and $S_1$ share the edge $\conv(x_2,x_3)$ and its corresponding outer normal linear form $w\in\Lambda^*$ appears in $D([Q_1])$ as well as $D([S_1])$. Let $T=D([Q_1])\setminus\{w\}$ and $U=D([S_1])\setminus\{w\}$. From the balancing condition we obtain
    \[
        \sum_{v\in T} v
        \ = \ -w
        \ = \sum_{v\in U} v.
    \]
    One easily verifies $T,U\subset C_1$, so we can use the linearity shown in \Cref{lem:norm-linear} to obtain
    \begin{align*}
        3\, m([Q_1])
        &= \norm{ w } + \sum_{v\in T} \norm{v}
        = \norm{ w } + \raisebox{-.7ex}{\Bigg\lVert} \sum_{v\in T} v\ \raisebox{-.7ex}{\Bigg\rVert}
        \\&= \norm{ w } + \raisebox{-.7ex}{\Bigg\lVert} \sum_{v\in U} v\ \raisebox{-.7ex}{\Bigg\rVert}
        = \norm{ w } + \sum_{v\in U} \norm{v}
        = 3\, m([S_1]).
    \end{align*}
    Continuing \cref{eq:polytope_subdiv} using this knowledge, we see that
    \[
        m([P])
        = m([S_1]) + m([Q_0\cup Q_2\cup Q_3]) - m([S_1\cap(Q_0\cup Q_2\cup Q_3])
        = m([Q_0\cup S_1\cup Q_2\cup Q_3]).
    \]
    In the same way, we can replace $Q_2$ and $Q_3$ by $S_2$ and $S_3$, respectively, to arrive at
    $m([P])=m([n\cdot\Delta])$, which we calculate to be $n([P])$:
    \begin{equation*}
        m([P]) = m([n\cdot\Delta]) = n\cdot m([\Delta]) = \tfrac{n}{3} \big(\norm{ b_1 } + \norm{ b_2 } + \norm{ b_3 }\big) = n. \qedhere
    \end{equation*}
\end{proof}

\begin{remark}
    While the result of \Cref{thm:sum-equiv} is the theoretically more useful, an easier computation of the simplicial diameter $n([P])$ can be given by evaluating the three outer normal linear forms $b_i$ on the polytope $P$ and adding their maxima:
    \begin{equation} \label{eq:sumofmax}
        n([P])
        = \max_{x\in P} b_1(x) + \max_{x\in P} b_2(x) + \max_{x\in P} b_3(x).
    \end{equation}
\end{remark}

\begin{remark}
    If $V$ is any real vector space of dimension $d$ and $\Delta\subset V$ a $d$-simplex, the map that assigns to a polytope $P\subset V$ the smallest $r\in\R_{\ge 0}$ such that a translate of $P$ is contained in $r\cdot\Delta$ can be expressed analogous to \eqref{eq:sumofmax} as a sum of $d+1$ maximized linear forms. Hence, this more general simplicial diameter will always be a Minkowski-additive, translation invariant valuation on polytopes in $V$.
    
    In the case $d=2$ we have $V=\R^2$ without loss of generality. Replacing lattice length by euclidean length and primitive vectors by unit vectors, we still have a correspondence of translation classes of polytopes in $V$ and balanced vector configurations in $V^*$. In this setting, $D([P])$ consists of vectors $v\in V^*$ such that $S([P],v)$ is an edge of same euclidean length as $v$. Here the euclidean norm on $V^*$ is defined such that $\langle x, \,\cdot\,\rangle$ has the same norm as $x$, where $\langle\,\cdot\,,\,\cdot\,\rangle$ is the standard inner product on $\R^2$. We can then set $K=\conv(D([\Delta]))$ as before and define $m\colon\Pc\to\R_{\ge 0}$ using the Minkowski-functional given by $K$. The same methods as above still yield $m([P])=r$ even in this general setting.
\end{remark}

    \section{An Infinite Family of Vertex-Maximal Polytopes} \label{sec-infinitefamily}

In this section, we construct an infinite family of vertex-maximal polytopes. Recall that a polytope $[P]$ is vertex-maximal if there is some $n\in\N$ such that $[P]$ has the largest number of vertices among all polytopes contained in $n\cdot[\Delta]$, or equivalently, among all polytopes contained in $n([P])\cdot[\Delta]$. The main tool for this construction will be a notion of \emph{saturated} sets in $\Lambda^*$.

\begin{definition}
    A set $S\subset \Lambda^*$ is said to be \emph{saturated} if it satisfies that
    \begin{enumerate}
        \item every $v\in S$ is primitive, and
        \item whenever $w\in\Lambda^*$ is primitive and $\norm{w} < \norm{v}$ for some $v\in S$, then $w\in S$.
    \end{enumerate}
\end{definition}

In other words, for some given norm bound $q\in\N$, the set $S$ contains all primitive elements $v\in\Lambda^*$ with $\norm{v} \le q$ and possibly some---but not all---with $\norm{v} = q+1$. Hence, letting $S_{\le q}$ denote the balanced saturated set of all primitive vectors $v\in\Lambda^*$ with $\norm{v}\le q$, every saturated set $S$ is of the form $S = S_{\le q} \cup R$ for unique $q\in\N$ and $R\subsetneqq S_{\le q+1}\setminus S_{\le q}$. On each cone $C_i=\cone(b_j,b_k)\subset V^*$ our asymmetric norm may be described as the $1$-norm with respect to the basis $b_j$, $b_k$ as shown in \Cref{lem:norm-linear}. Thus, denoting by $\widetilde C_i$ the half-open cone in $C_i$ with strictly positive $b_j$-coordinate, the number of primitive vectors in $\widetilde C_i$ of norm $l$ is equal to $\varphi(l)$, where $\varphi$ is Euler's totient function. Hence, the number of primitive vectors in $\Lambda^*$ of norm $l$ is equal to $3\,\varphi(l)$ and for any saturated set $S$ we obtain the norm bound as the largest $q\in\N$ such that $3\sum_{l=1}^q \varphi(l)\le|S|$.

Since all vectors in a saturated set are primitive, it is always a vector configuration. Hence, if a saturated set $S\subset\Lambda^*$ is balanced, there is a unique polytope $[P_S]$ with $D([P_S])=S$. As we will show now, these polytopes are always vertex-maximal.

\begin{lemma} \label{lem:saturated}
    Let $k\in\N$ and for each vector configuration $T\subset\Lambda^*$ with $|T|=k$ consider the sum $\sum_{v\in T} \norm{v}$. A vector configuration minimizes this sum if and only if it is saturated.
\end{lemma}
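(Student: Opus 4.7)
My plan is a two-step reduction followed by a counting observation. First I would show that any minimizer consists of primitive vectors; then that among primitive configurations, the non-saturated ones are never minimizers; and finally that all saturated configurations of size $k$ share the same norm sum, so saturated implies minimal.

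For the reduction to primitives, suppose $v = \lambda\hat v \in T$ with $\hat v$ primitive and $\lambda \ge 2$. Since $v$ and $\hat v$ span the same cone, $\hat v \notin T$, so $T' = (T\setminus\{v\})\cup\{\hat v\}$ is again a vector configuration of size $k$; by positive homogeneity of $\llrr$ the sum strictly drops by $(\lambda-1)\norm{\hat v} > 0$. For the swap among primitive configurations, assume $T$ is primitive of size $k$ but not saturated. By definition there exist $v\in T$ and a primitive $w\in\Lambda^*\setminus T$ with $\norm{w} < \norm{v}$. The set $T' = (T\setminus\{v\})\cup\{w\}$ is again a vector configuration of size $k$---the only thing to check is that $w$ shares no direction with any remaining element, which is immediate since both $w$ and that element would be primitive and distinct---and its sum is strictly smaller than that of $T$. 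Combining these two steps, any minimizer is saturated.

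For the converse I would invoke the structural description stated just before the lemma: every saturated $S$ of size $k$ decomposes uniquely as $S = S_{\le q}\cup R$ with $R\subsetneq S_{\le q+1}\setminus S_{\le q}$, so that
\[
    \sum_{v\in S}\norm{v} \;=\; \sum_{v\in S_{\le q}}\norm{v} \;+\; \bigl(k - |S_{\le q}|\bigr)(q+1)
\]
depends only on $k$. Consequently all saturated configurations of size $k$ have the same norm sum. Since $\llrr$ takes nonnegative integer values on $\Lambda^*$ (by \Cref{lem:norm-linear}, using that $b_j$, $b_k$ form a basis of $\Lambda^*$ on each $C_i$ by unimodularity of $\Delta$), the minimum over configurations of size $k$ is attained; by the previous paragraph it must be attained at a saturated configuration, hence at all of them. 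The only mildly subtle point of the whole argument is this last appeal to attainedness of the minimum over an infinite family; the substantive content is the two direction-counting swaps.
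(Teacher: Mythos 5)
Your proof is correct and follows essentially the same route as the paper's: the two swap arguments (replace a non-primitive vector by its primitive counterpart, then replace a primitive vector by a cheaper one outside $T$) show any minimizer is saturated, and the observation that the decomposition $S=S_{\le q}\cup R$ forces all saturated sets of size $k$ to have the same norm sum gives the converse. You additionally make explicit two points the paper leaves implicit---that the swapped sets remain vector configurations and that the minimum over the infinite family of configurations is actually attained via integrality of $\llrr$ on $\Lambda^*$---which is a welcome tightening but not a different argument.
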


\begin{proof}
    Let $T\subset\Lambda^*$ be a vector configuration with $|T|=k$ such that $\sum_{v\in T}\norm{v}$ is minimal. If any of the $v\in T$ were non-primitive, they could be exchanged for their primitive counterparts $v/\ell(v)$. Since $\llrr$ is positive homogeneous, this would reduce the sum of norms, contradicting the minimality. If there were some primitive $v\in T$, $w\notin T$ with $\norm{w} < \norm{v}$, we could replace $v$ by $w$ and lower the sum of norms, again contradicting the minimality. Hence, $T$ is saturated.
    
    Now let $S\subset\Lambda^*$ be any saturated set with $|S|=k$. Since the norm bound $q$ such that $S=S_{\le q}\cup R$ is determined by $|S|=k$, the considered sum
    \begin{equation*}
        \sum_{v\in S} \norm{v} = 3 \sum_{l=1}^q l\,\varphi(l) + (q+1)\,|R|
    \end{equation*}
    is also determined by $k$. Hence, it is minimal for all saturated sets with $|S|=k$.
\end{proof}

\begin{corollary} \label{cor:saturated-maximal}
    Every polytope $[P]$ with saturated $D([P])$ is vertex-maximal and furthermore minimizes the simplicial diameter $n([P])$ under all polytopes with the same number of vertices.
\end{corollary}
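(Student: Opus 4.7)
The plan is to read off both claims from \Cref{thm:sum-equiv}, which expresses $n([P]) = \tfrac{1}{3}\sum_{v \in D([P])} \norm{v}$, combined with the extremal property established in \Cref{lem:saturated}. Let $[P]$ have saturated $D([P])$, and write $k = f_0([P])$, $n = n([P])$. The trivial case $[P] = [*]$ (where $D([P]) = \emptyset$, $k = 1$ and $n = 0$) is handled at once: any $[Q]$ with $n([Q]) = 0$ must have $D([Q]) = \emptyset$ by \Cref{thm:sum-equiv} and positive definiteness of $\llrr$, so $[Q] = [*]$ and the two conclusions hold. Hence I may assume $|D([P])| = k \geq 2$.

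For vertex-maximality, let $[Q] \in \Pc$ satisfy $n([Q]) \leq n$ and suppose, for contradiction, that $f_0([Q]) > k$, so that $|D([Q])| > k$. I would pick any $k$-element subset $T \subset D([Q])$; this is still a vector configuration because the condition $v_i \notin \cone(v_j)$ is inherited by subsets. \Cref{lem:saturated}, applied at cardinality $k$ to the saturated configuration $D([P])$, then gives
\[
\sum_{v \in T} \norm{v} \ \geq\ \sum_{v \in D([P])} \norm{v} \ =\ 3n,
\]
while each of the $|D([Q])| - k \geq 1$ vectors in $D([Q]) \setminus T$ contributes at least $1$ (see the next paragraph). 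This yields $3\, n([Q]) > 3n$, contradicting $n([Q]) \leq n$. The second claim falls out of the same setup: if $f_0([Q]) = k$ then $|D([Q])| = k$, and \Cref{lem:saturated} gives $\sum_{v \in D([Q])} \norm{v} \geq \sum_{v \in D([P])} \norm{v}$, i.e.\ $n([Q]) \geq n$.

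The one step that requires justification beyond the cited results is the claim that $\norm{v} \geq 1$ for every nonzero $v \in \Lambda^*$, which is what makes the extra vectors in $D([Q]) \setminus T$ strictly increase the norm sum. Here I would invoke \Cref{lem:norm-linear}: on each cone $C_i = \cone(b_j, b_k)$ we have $\norm{v} = b_j^*(v) + b_k^*(v)$. Since $\Delta$ is unimodular, $\{b_j, b_k\}$ is a $\Z$-basis of $\Lambda^*$, so the coordinates $b_j^*(v), b_k^*(v)$ of any $v \in \Lambda^* \cap C_i$ are nonnegative integers, with at least one strictly positive when $v \neq 0$. I expect this integrality check to be the only real obstacle; everything else is a direct combinatorial consequence of \Cref{thm:sum-equiv} and \Cref{lem:saturated}.
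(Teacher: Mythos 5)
Your proposal is correct and follows essentially the same route as the paper: both arguments compare a $k$-element subconfiguration of $D([Q])$ against the saturated configuration $D([P])$ via \Cref{lem:saturated} and \Cref{thm:sum-equiv}, using positivity of the norms of the leftover vectors to force a strict inequality (the paper only needs $\norm{w}>0$ from positive definiteness, so your integrality check $\norm{w}\ge 1$ is a correct but dispensable refinement). The explicit treatment of $[P]=[*]$ is a harmless extra precaution the paper omits.
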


\begin{proof}
    Let $[P]$ be a polytope such that $D([P])$ is saturated. Assume that $[P]$ is not vertex-maximal, so there is a polytope $[Q]$ with $n([Q])\le n([P])$ and $f_0([Q]) > f_0([P])$. Note that this is equivalent to $|D([Q])|>|D([P])|$ and let $D([Q]) = \{w_1,w_2,\dots,w_{|D([Q])|}\}$. Using \Cref{thm:sum-equiv} and that all $\norm{w_i}>0$, we obtain the inequality
    \begin{equation*}
        \smashoperator{\sum_{i=1}^{|D([P])|}} \norm{w_i} \ < \ 
        \smashoperator{\sum_{i=1}^{|D([Q])|}} \norm{w_i} \ = \
        3\,n([Q]) \ \le \ 
        3\,n([P]) \ = \ 
        \smashoperator{\sum_{v\in D([P])}} \norm{v}.
    \end{equation*}
    This is in contradiction to \Cref{lem:saturated} and we conclude that $[P]$ was vertex-maximal.

    The fact that $[P]$ minimizes $n([Q])$ under all polytopes $[Q]$ with the same number of vertices is a direct consequence of \Cref{lem:saturated} together with \Cref{thm:sum-equiv}.
\end{proof}

For the vertex-maximal polytopes obtained from balanced saturated sets, we can precisely state their numbers of vertices and simplicial diameter.

\begin{proposition}\label{prop:f_0-n}
    Let $[P_S]$ be the polytope corresponding to a non-empty balanced saturated set $S\subset\Lambda^*$. From the unique decomposition $S = S_{\le q} \cup R$ with $R$ a balanced set of vectors of norm $q+1$ we obtain the number of vertices and simplicial diameter of $[P_S]$ as
    \[
        f_0([P_S]) = |S| = 3\sum_{l=1}^q \varphi(l) + |R|
        \quad\text{and}\quad
        n([P_S]) = \frac{1}{3} \sum_{v\in S} \norm{v} = \sum_{l=1}^q l\,\varphi(l) + \frac{q+1}{3}\,|R|. \qed
    \]
\end{proposition}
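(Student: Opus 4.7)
The plan is to reduce each of the two equalities to a bookkeeping computation built entirely on results already stated in the paper; there is no real obstacle, only careful assembly. The argument naturally splits into a vertex count and a sum-of-norms computation.

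For the vertex count, I would first observe that the hypothesis that $S$ is non-empty ensures $[P_S]\neq [*]$, so the identification $f_0([P]) = |D([P])|$ noted just before the bijection $D\colon\Pc\to\Dc$ applies and gives $f_0([P_S]) = |D([P_S])| = |S|$. To put $|S|$ into the stated closed form I would invoke the count established in the paragraph preceding \Cref{lem:saturated}: on each half-open cone $\widetilde C_i$ there are exactly $\varphi(l)$ primitive vectors of norm $l$, so $\Lambda^*$ contains $3\varphi(l)$ primitive vectors of norm $l$ overall. Since the decomposition $S = S_{\le q}\cup R$ is disjoint, summing over $l = 1,\dots,q$ yields $|S_{\le q}| = 3\sum_{l=1}^q \varphi(l)$ and hence $|S| = 3\sum_{l=1}^q \varphi(l) + |R|$.

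For the simplicial diameter, I would apply \Cref{thm:sum-equiv} together with the definition of $m$ to write
\[
    n([P_S]) \;=\; m([P_S]) \;=\; \tfrac{1}{3}\sum_{v\in S}\norm{v},
\]
and then split the sum along $S = S_{\le q}\cup R$. Grouping primitive vectors in $S_{\le q}$ by their common norm $l$ and applying the same $3\varphi(l)$ count gives $\sum_{v\in S_{\le q}}\norm{v} = 3\sum_{l=1}^q l\,\varphi(l)$. Every vector in $R$ has norm exactly $q+1$ by construction, so $\sum_{v\in R}\norm{v} = (q+1)\,|R|$. Adding the two contributions and dividing by $3$ yields the claimed expression for $n([P_S])$.

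Because every nontrivial ingredient — the bijection $D$, the identification of $f_0$ with $|D|$, \Cref{thm:sum-equiv}, and the per-norm count of primitive vectors — is already available in the preceding text, I do not foresee any genuine difficulty; the proof is essentially combinatorial bookkeeping. The only point that merits explicit mention is the non-emptiness hypothesis on $S$, which rules out the anomalous case $[P_S] = [*]$ excluded in the $f_0$--$|D|$ correspondence.
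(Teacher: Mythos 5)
Your argument is correct and is precisely the bookkeeping the paper leaves implicit by ending the proposition with a \qed: the identification $f_0([P_S])=|D([P_S])|=|S|$, the count of $3\varphi(l)$ primitive vectors of norm $l$, and the identity $n=m=\tfrac13\sum_{v\in S}\norm{v}$ from \Cref{thm:sum-equiv} are exactly the ingredients intended. Your explicit remark that non-emptiness of $S$ rules out the anomalous case $[P_S]=[*]$ is a welcome touch of care.
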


Of particular interest are the polytopes corresponding to the saturated sets with $R=\varnothing$, as they are not only vertex-maximal, but uniquely so.

\begin{corollary}
    If $n=\sum_{l=1}^q l\,\varphi(l)$, the only $n$-vertex-maximal polytope is $[P_{S_{\le q}}]$.
\end{corollary}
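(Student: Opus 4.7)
The plan is to combine \Cref{cor:saturated-maximal}, \Cref{lem:saturated}, and \Cref{thm:sum-equiv} with a short counting step: the key point is that $S_{\le q}$ is the only saturated set of cardinality $|S_{\le q}|$. First I would observe that \Cref{prop:f_0-n} already gives $n([P_{S_{\le q}}]) = n$ and $f_0([P_{S_{\le q}}]) = |S_{\le q}|$, so \Cref{cor:saturated-maximal} yields $A(n) = |S_{\le q}|$. By injectivity of the bijection $D\colon\Pc\to\Dc$ it then suffices to show that every $n$-vertex-maximal polytope $[Q]$ satisfies $D([Q]) = S_{\le q}$. The case $q=0$ is vacuous, since then $n=0$ and $0\cdot[\Delta] = [*]$, so assume $q\ge 1$.

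Next, fix any $n$-vertex-maximal $[Q]$, so $n([Q])\le n$ and $f_0([Q]) = |S_{\le q}|$. Since $q\ge 1$ forces $f_0([Q]) \ge 3$, the polytope $[Q]$ is not the point and hence $|D([Q])| = f_0([Q]) = |S_{\le q}|$. \Cref{thm:sum-equiv} then supplies the squeeze
\[
    \sum_{v\in D([Q])}\norm{v} \;=\; 3\,n([Q]) \;\le\; 3n \;=\; \sum_{v\in S_{\le q}}\norm{v}.
\]
Because $S_{\le q}$ is saturated, \Cref{lem:saturated} identifies the right-hand side as the minimum of $\sum\norm{v}$ over all vector configurations of cardinality $|S_{\le q}|$. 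Hence equality must hold throughout, $D([Q])$ achieves the same minimum, and the converse direction of \Cref{lem:saturated} forces $D([Q])$ to be saturated.

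To conclude I would use the unique saturated decomposition $D([Q]) = S_{\le q'}\cup R$ with $R\subsetneqq S_{\le q'+1}\setminus S_{\le q'}$, giving
\[
    |D([Q])| \;=\; 3\sum_{l=1}^{q'}\varphi(l) + |R|,\qquad |R| < 3\varphi(q'+1).
\]
Comparing with $|S_{\le q}| = 3\sum_{l=1}^{q}\varphi(l)$: if $q' < q$, then $|D([Q])| < 3\sum_{l=1}^{q'+1}\varphi(l)\le |S_{\le q}|$; if $q'>q$, then $|D([Q])| \ge 3\sum_{l=1}^{q+1}\varphi(l) > |S_{\le q}|$. Both are contradictions, forcing $q'=q$ and $R=\varnothing$, so $D([Q]) = S_{\le q}$ and hence $[Q] = [P_{S_{\le q}}]$.

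The only substantive step is the two-sided squeeze in the middle paragraph: the upper bound $n([Q])\le n$ is by assumption, while the matching lower bound is exactly the content of \Cref{lem:saturated} applied to the saturated set $S_{\le q}$. Once saturation of $D([Q])$ is in hand, the remaining count using $\varphi$ is purely arithmetic and presents no obstacle.
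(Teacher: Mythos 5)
Your proposal is correct and follows essentially the same route as the paper: the paper's (much terser) proof likewise invokes \Cref{cor:saturated-maximal} for maximality and then combines \Cref{lem:saturated} with the observation that $S_{\le q}$ is the only saturated set of its cardinality, which is exactly the squeeze-plus-counting argument you spell out in detail.
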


\begin{proof}
    First note that $[P_{S_{\le q}}]$ is $n$-vertex-maximal by \Cref{cor:saturated-maximal} and \Cref{prop:f_0-n}. From \Cref{lem:saturated} together with the fact, that $S_{\le q}$ is the only saturated set of cardinality $n$, we conclude that $[P_{S_{\le q}}]$ is the only $n$-vertex-maximal polytope.
\end{proof}

We finish this section with a theorem capturing the numerical results we obtain using our construction of vertex-maximal polytopes using saturated sets.

\begin{theorem} \label{thm:sequence-maximal}
    There exists a sequence of vertex-maximal polytopes $([Q_k])_{k\in\N}$ in $\Pc$ with
    \begin{equation*}
        f_0([Q_k]) = 3k = 3\sum_{l=1}^q \varphi(l) + 3r
        \qquad\text{and}\qquad
        n([Q_k]) = \sum_{l=1}^q l\,\varphi(l) + r\,(q+1),
    \end{equation*}
    where $r,q\in\N$ are the unique non-negative integers such that $k=\sum_{l=1}^q \varphi(l)+r$ and $r<\varphi(q+1)$. \qed
\end{theorem}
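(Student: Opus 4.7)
The plan is to exhibit, for each $k\in\N$, a balanced saturated set $S_k\subset\Lambda^*$ with $|S_k|=3k$. Then the bijection $D\colon\Pc\to\Dc$ yields a polytope $[Q_k]$ with $D([Q_k])=S_k$, and \Cref{cor:saturated-maximal} together with \Cref{prop:f_0-n} directly produce vertex-maximality and the asserted formulas for $f_0([Q_k])$ and $n([Q_k])$.

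Given $k$, I would fix the unique $q,r\in\N$ with $k=\sum_{l=1}^q\varphi(l)+r$ and $0\le r<\varphi(q+1)$, and look for $S_k$ of the form $S_{\le q}\cup R$, where $R\subsetneqq S_{\le q+1}\setminus S_{\le q}$ has cardinality $3r$. Since $|S_{\le q}|=3\sum_{l=1}^q\varphi(l)$, this will give $|S_k|=3k$ on the nose, and \Cref{prop:f_0-n} will deliver exactly the claimed $n([Q_k])=\sum_{l=1}^q l\,\varphi(l)+r(q+1)$.

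The main obstacle is producing such an $R$ that is actually balanced, because the cardinality alone does not guarantee this. To handle it, I would introduce the order-three lattice automorphism $\sigma\colon\Lambda^*\to\Lambda^*$ dual to the linear map on $V$ that cycles $\overline{e_1}\mapsto\overline{e_2}\mapsto\overline{e_3}\mapsto\overline{e_1}$. Since $\sigma$ permutes $b_1,b_2,b_3$ cyclically, it preserves the absorbing set $K=\conv(b_1,b_2,b_3)$ and hence the asymmetric norm $\llrr$; it also preserves primitivity, being a lattice automorphism. Being a genuine rotation by $2\pi/3$ of the plane $V^*$, its only fixed point is $0$. For any nonzero $v\in\Lambda^*$, the orbit $\{v,\sigma(v),\sigma^2(v)\}$ therefore consists of three distinct vectors whose sum is $\sigma$-invariant, hence zero; in particular every nonzero $\sigma$-orbit is a balanced subset of $\Lambda^*$.

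Because $\sigma$ cyclically permutes the three cones $C_i$, each $\sigma$-orbit of a primitive norm-$l$ vector meets each half-open cone $\widetilde{C}_i$ in exactly one element. Consequently, the $3\varphi(q+1)$ primitive vectors of norm $q+1$ partition into precisely $\varphi(q+1)$ such orbits. The inequality $r<\varphi(q+1)$ lets me choose any $r$ of these orbits and take their union to be $R$; by construction it has cardinality $3r$, is balanced, and is a proper subset of $S_{\le q+1}\setminus S_{\le q}$. The same argument applied to $S_{\le q}$ (which is $\sigma$-invariant since $\sigma$ preserves primitivity and the norm) presents it as a disjoint union of balanced orbits, so $S_{\le q}$ is itself balanced. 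Therefore $S_k=S_{\le q}\cup R$ is balanced and saturated, and $[Q_k]=D^{-1}(S_k)$ is the required vertex-maximal polytope.
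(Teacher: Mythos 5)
Your proposal is correct and follows essentially the same route as the paper: take $S_{\le q}\cup R$ with $R$ built from $r$ primitive norm-$(q+1)$ vectors in one half-open cone together with their images under the cyclic permutation of $b_1,b_2,b_3$, then invoke \Cref{cor:saturated-maximal} and \Cref{prop:f_0-n}. Your treatment merely spells out in more detail why the $\sigma$-orbits are balanced and partition the norm-$(q+1)$ primitive vectors into $\varphi(q+1)$ classes, which the paper leaves implicit.
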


\begin{proof}
    Given $k=\sum_{l=1}^q\varphi(l)+r$ let $[Q_k]$ be the polytope corresponding to the balanced saturated set $S_{\le q} \cup R$ where $R$ is any balanced set of $3r$ primitive vectors of norm $q+1$. This can always be achieved by choosing $r$ primitive vectors in the half-open cone $\widetilde C_1$ and adding their counterparts from $\widetilde C_2$, $\widetilde C_3$ obtained by cyclic permutation of coordinates with respect to $b_1, b_2, b_3$. Any sequence obtained this way satisfies the desired properties by \Cref{cor:saturated-maximal} and \Cref{prop:f_0-n}.
\end{proof}

    \section{Intermediate Cases}\label{sec-intermediate}

Not for every $n\in\N$ is there a balanced saturated set $S$ such that $[P_S]$ is $n$-vertex-maximal. The reason is that among the saturated sets in \Cref{lem:saturated} there might not be a balanced one. This happens, for example, when $n=\sum_{l=1}^q l\,\varphi(l)+1$, since in this case $R$ has to consist of a single vector and can not possibly be balanced.
However, we can use \Cref{thm:sequence-maximal} to obtain bounds on $A(n)$ and furthermore observe certain properties of $n$-vertex-maximal polytopes for all $n\in\N$.

\begin{proposition} \label{prop:bounds}
    Given $n\in\N$ let $q$ be maximal such that $\sum_{l=1}^q l\,\varphi(l)\le n$ and let $r<\varphi(q+1)$ be maximal such that $\sum_{l=1}^q l\,\varphi(l)+r\,(q+1)\le n$. In case $n=\sum_{l=1}^q l\,\varphi(l)+r\,(q+1)$, we have
    \begin{equation*}
        A(n) = 3\sum_{l=1}^q \varphi(l) + 3r.
    \end{equation*}
    Otherwise, we obtain bounds
    \begin{equation*}
        3\sum_{l=1}^q \varphi(l) + 3r \le A(n) <
        3\sum_{l=1}^q \varphi(l) + 3(r+1).
    \end{equation*}
\end{proposition}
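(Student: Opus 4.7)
The plan is to derive the lower bound from \Cref{thm:sequence-maximal} and the upper bound by combining the valuation identity $n([P])=\tfrac13\sum_{v\in D([P])}\norm{v}$ of \Cref{thm:sum-equiv} with the fact that saturated configurations minimise the norm sum among equinumerous vector configurations (\Cref{lem:saturated}).

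For the lower bound in both cases, set $k=\sum_{l=1}^q\varphi(l)+r$. Since $r<\varphi(q+1)$, \Cref{thm:sequence-maximal} produces a polytope $[Q_k]\in\Pc$ with $f_0([Q_k])=3\sum_{l=1}^q\varphi(l)+3r$ and $n([Q_k])=\sum_{l=1}^q l\,\varphi(l)+r(q+1)\le n$, so $[Q_k]\subset n\cdot[\Delta]$ witnesses $A(n)\ge 3\sum_{l=1}^q\varphi(l)+3r$.

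For the upper bound, suppose towards a contradiction that some $[P]\in\Pc$ has $n([P])\le n$ together with $f_0([P])>3\sum_{l=1}^q\varphi(l)+3r$ in the equality case $n=\sum_{l=1}^q l\,\varphi(l)+r(q+1)$, or $f_0([P])\ge 3\sum_{l=1}^q\varphi(l)+3(r+1)$ in the strict case. The trivial case $[P]=[*]$ forces $n([P])=0$ and may be excluded, so $k'\coloneqq|D([P])|=f_0([P])$. Combining \Cref{lem:saturated} and \Cref{thm:sum-equiv} gives $3\,n([P])\ge \sum_{v\in S}\norm{v}$ for any saturated $S$ of cardinality $k'$. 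Whenever a saturated set has cardinality $3\sum_{l=1}^q\varphi(l)+s$ with $0\le s\le 3\varphi(q+1)$, it has the form $S_{\le q}\cup R$ with $|R|=s$, and its norm sum equals $3\sum_{l=1}^q l\,\varphi(l)+s(q+1)$; enlarging a saturated set further only adjoins primitive vectors of norm at least $q+2$, so the saturated-set norm sum is non-decreasing in cardinality.

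In the strict case, applying the bound with $s=3(r+1)\le 3\varphi(q+1)$ yields $n([P])\ge\sum_{l=1}^q l\,\varphi(l)+(r+1)(q+1)>n$, where the strict inequality uses the maximality of $r$. In the equality case, applying the bound with $s=3r+1\le 3\varphi(q+1)$ yields $3\,n([P])\ge 3n+(q+1)>3n$. Both contradict $n([P])\le n$. The main point that needs careful verification is the monotonicity of the saturated-set norm sum once $k'$ exceeds $3\sum_{l=1}^{q+1}\varphi(l)$ and vectors of norm $\ge q+2$ are forced into $S$; this monotonicity is however immediate from the positivity of $\norm{\cdot}$ on $\Lambda^*\setminus\{0\}$, so the contradiction persists for all admissible $k'$.
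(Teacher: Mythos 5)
Your proof is correct and follows essentially the same route as the paper: the lower bound comes from the explicit polytopes of \Cref{thm:sequence-maximal}, and the upper bound from combining \Cref{lem:saturated} with \Cref{thm:sum-equiv}, which is precisely the content of \Cref{cor:saturated-maximal} that the paper's (much terser) proof cites, with your monotonicity remark making explicit a point the paper glosses over. One tiny inaccuracy: in the strict case with $r+1=\varphi(q+1)$, the inequality $\sum_{l=1}^q l\,\varphi(l)+(r+1)(q+1)=\sum_{l=1}^{q+1} l\,\varphi(l)>n$ follows from the maximality of $q$ rather than of $r$, but the conclusion is unaffected.
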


\begin{proof}
    In case of $n=\sum_{l=1}^q l\,\varphi(l)+r\,(q+1)$, the given formula for $A(n)$ is a direct consequence of \Cref{thm:sequence-maximal}. Otherwise, the bounds are obtained from $A(n)$ being weakly increasing and the fact that the polytopes $[P_S]$ for balanced saturated sets $S$ minimize the simplicial diameter among all polytopes of the same number of vertices as shown in \Cref{cor:saturated-maximal}.
\end{proof}

In \Cref{tab:An} we summarize the values for $A(n)$ we could obtain by computation and highlight those implied by \Cref{prop:bounds} with underlines and in case $r=0$ double underlines.

\begin{table}[b]
    \centering
    \caption{The values of $A(n)$ for $n\le 37$ obtained by computation. Values implied by \Cref{prop:bounds} are underlined. Double underlines indicate $r=0$.}
    \begin{tabular}{cc}
        \toprule
        $n$ & $A(n)$ \\ \midrule
        $1$ & \doubleunderline{$3$} \\
        $2$ & $4$ \\
        $3$ & \doubleunderline{$6$} \\
        $4$ & $6$ \\
        $5$ & $8$ \\
        $6$ & \underline{$9$} \\
        $7$ & $10$ \\
        $8$ & $10$ \\
        $9$ & \doubleunderline{$12$} \\
        $10$ & $12$ \\
        \bottomrule
    \end{tabular}
    \quad
    \begin{tabular}{cccc}
        \toprule
        $n$ & $A(n)$ \\ \midrule
        $11$ & $13$ \\
        $12$ & $14$ \\
        $13$ & \underline{$15$} \\
        $14$ & $15$ \\
        $15$ & $16$ \\
        $16$ & $17$ \\
        $17$ & \doubleunderline{$18$} \\
        $18$ & $18$ \\
        $19$ & $19$ \\
        $20$ & $19$ \\
        \bottomrule
    \end{tabular}
    \quad
    \begin{tabular}{cccc}
        \toprule
        $n$ & $A(n)$ \\ \midrule
        $21$ & $20$ \\
        $22$ & \underline{$21$} \\
        $23$ & $21$ \\
        $24$ & $22$ \\
        $25$ & $22$ \\
        $26$ & $23$ \\
        $27$ & \underline{$24$} \\
        $28$ & $24$ \\
        $29$ & $25$ \\
        $30$ & $25$ \\
        \bottomrule
    \end{tabular}
    \quad
    \begin{tabular}{cccc}
        \toprule
        $n$ & $A(n)$ \\ \midrule
        $31$ & $26$ \\
        $32$ & \underline{$27$} \\
        $33$ & $27$ \\
        $34$ & $28$ \\
        $35$ & $28$ \\
        $36$ & $29$ \\
        $37$ & \underline{$30$} \\
        \vphantom{$38$} & \\
        \vphantom{$39$} & \\
        \vphantom{$40$} & \\
        \bottomrule
    \end{tabular}
    \label{tab:An}
\end{table}

\begin{proposition}\label{prop:boundary_edges}
    For every $n>0$, there is an $n$-vertex-maximal polytope $P$ such that $P\cap \partial (n\cdot \Delta)$ consists of one edge of lattice length $1$ on each of the three edges of $n\cdot\Delta$.
\end{proposition}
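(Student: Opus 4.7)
My plan is to translate the boundary condition on $P$ into a combinatorial condition on the vector configuration $D([P])$ via the correspondence $D\colon \Pc \to \Dc$ established earlier, and then construct the polytope as a Minkowski sum with $[\Delta]$.

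First, I observe that the condition ``$P \cap \partial(n\cdot\Delta)$ consists of one edge of lattice length $1$ on each of the three edges of $n\cdot\Delta$'' is equivalent to $b_1, b_2, b_3 \in D([P])$ together with $n([P]) = n$. Indeed, any edge of $P$ of lattice length $1$ with outer normal in the direction of $b_i$ contributes precisely the vector $b_i$ to $D([P])$. Conversely, given these two conditions, equation \eqref{eq:sumofmax} shows that the linear system $b_i(t) = \max_P b_i - \max_{n\cdot\Delta} b_i$ is consistent (both sides sum to zero, using $b_1+b_2+b_3 = 0$ and $n([P]) = n = n([n\cdot\Delta])$), and in the translate $n\cdot\Delta + t$ each of the three edges of $P$ with outer normal $b_i$ lies exactly on the corresponding side.

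I then construct $[P]$ as a Minkowski sum $[P] = [P'] + [\Delta]$ for a lattice polytope $[P']$ whose configuration $D([P'])$ contains no positive scalar multiple of any $b_i$. By \Cref{lem:corres_mink_sum}, no merging occurs in the operation $\boxplus$, so $D([P]) = D([P']) \cup \{b_1, b_2, b_3\}$, yielding $f_0([P]) = f_0([P']) + 3$ and $n([P]) = n([P']) + 1$ by the Minkowski-additivity of $n$. The problem thus reduces to producing, for each $n > 0$, a polytope $[P']$ with $f_0([P']) = A(n) - 3$, $n([P']) = n - 1$, and $D([P'])$ avoiding the three rays $\R_{\ge 0}\,b_i \setminus \{0\}$.

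Such a $[P']$ is immediate when $n$ is one of the values covered by \Cref{thm:sequence-maximal}: the balanced saturated-set polytope $[P_S]$ produced there already satisfies $b_1, b_2, b_3 \in D([P_S])$, so $[P] = [P_S]$ itself works without needing any Minkowski decomposition. For the remaining values of $n$, I would build $[P']$ by starting from an $(n-1)$-vertex-maximal polytope and modifying its configuration to eliminate any vector $k\,b_i$ in a $b_i$-direction, replacing it by a carefully chosen balanced pair of primitive vectors in the interiors of the two cones adjacent to the ray $\R_{\ge 0}\,b_i$. The main obstacle is simultaneously controlling the cardinality and the total norm under these replacements; I expect this to require a case analysis tied to the structure of $A(n)$ relative to the sequence in \Cref{thm:sequence-maximal}, together with a direct verification in the smallest cases where the available configurations are tightly constrained.
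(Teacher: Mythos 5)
Your reduction is correct as far as it goes: the equivalence between the boundary condition and ``$b_1,b_2,b_3\in D([P])$ together with $n([P])=n$'' is sound (the translation $t$ exists and is a lattice vector because any two of the $b_i$ form a basis of $\Lambda^*$ and the three conditions sum to zero), and the observation that the polytopes of \Cref{thm:sequence-maximal} already satisfy it settles the proposition for all $n$ of the form $\sum_{l=1}^q l\,\varphi(l)+r(q+1)$. The problem is everything else. For the remaining $n$ your argument stops at an admitted to-do: you must produce a polytope $[P']$ with $f_0([P'])=A(n)-3$, $n([P'])=n-1$, and $D([P'])$ disjoint from the rays $\R_{\ge0}b_i$ --- but by your own equivalence and \Cref{lem:corres_mink_sum}, the existence of such a $[P']$ is precisely a restatement of the proposition. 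So the reformulation buys nothing for general $n$, and the sketched repair (replacing a vector $k\,b_i$ by a balanced pair in the adjacent cones) faces concrete obstacles you do not resolve: the replacement changes the cardinality of the configuration, the triangle inequality only gives $\norm{u}+\norm{w}\ge k$ so the simplicial diameter may increase, the chosen vectors may collide with directions already present and merge under $\boxplus$, and you have not even established the prerequisite $A(n)\le A(n-1)+3$ needed for an $(n-1)$-vertex-maximal starting point to have enough vertices. ``I expect this to require a case analysis\dots together with a direct verification in the smallest cases'' is not a proof.

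The paper avoids all of this with a uniform geometric surgery on an \emph{arbitrary} $n$-vertex-maximal polytope $Q$, requiring no knowledge of $A(n)$ or of the structure of maximizers. After Minkowski-adding $(n-n([Q]))\cdot\Delta$ to ensure $Q$ touches all three sides, the key step handles a single contact point $v_0$ on a side of $n\cdot\Delta$ by shearing: the vertices $v_0,\dots,v_{k-1}$ on the arc from that side to the next are translated by $\overline{e_1}$ and the convex hull is taken (\Cref{fig:enlarge_along_edge}). This creates a boundary edge while gaining $k$ new vertices and losing at most $k$ old ones, so vertex-maximality is preserved; a final truncation shortens any boundary edge of lattice length $k>1$ to length $1$ without changing the vertex count. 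If you want to keep your configuration-side viewpoint, you would need an analogue of this shear at the level of $D([P])$ --- that is the missing idea.
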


\begin{proof}
    Let $[Q]$ be any $n$-vertex-maximal polytope with representative $Q\subset n\cdot\Delta$. If $n([Q])<n$, we can instead consider the Minkowski sum $Q+(n-n([Q]))\cdot \Delta$ which is still $n$-vertex-maximal but shares points with each of the three edges of $n\cdot\Delta$ so that $n([Q])=n$.
    
    Now assume that $Q\cap\conv\{n\,a_1,n\,a_2\}=\{v_0\}$ is a single point. We can modify $Q$ in the following way to obtain a polytope $Q'$ with an edge of lattice length $1$ on $\conv\{n\,a_1,n\,a_2\}$ without decreasing the number of vertices. Let $v_0,v_1,\dots,v_k$ be the vertices of $Q$ on the arc from $\conv\{n\,a_1,n\,a_2\}$ to $\conv\{n\,a_2,n\,a_3\}$ so that only $v_0$ and $v_k$ lie on the boundary of $n\cdot\Delta$. Translating the vertices $v_0,\ldots,v_{k-1}$ by $\overline{e_1}$ as depicted in \Cref{fig:enlarge_along_edge} we set
    \[
        Q'= \conv(Q,v_0+\overline{e_1},\ldots,v_{k-1}+\overline{e_1}).
    \]
    Note that none of the new vertices lie outside $n\cdot\Delta$ by our choice of $v_0$ and $v_k$. Furthermore, $Q'$ gained the $k$ vertices $v_0+\overline{e_1},\ldots,v_{k-1}+\overline{e_1}$ that $Q$ does not have, and it lost the vertices $v_1,\ldots,v_{k-1}$ and possibly $v_k$ that $Q$ had. It thus has at least as many vertices as $Q$ and is hence still $n$-vertex-maximal.
    
    \begin{figure}
        \begin{tikzpicture}
            \draw[lightgray] (0,0) -- ++(4,0) -- (0,4);
            \draw (1,2) -- (0,4) -- (2,2);
            \draw[red] (0,0) -- (1,1) -- (1,2);
            \draw[blue] (0,0) -- (1,0) -- (2,1) -- (2,2);
            \draw[thick,->] (0.5,0.5) -- (1.5,0.5);
            \draw[thick,->] (1,1.5) -- (2,1.5);
            \draw[thick,->] (0,4) ++(-90:1.4cm) to[out=0,in=-150] ($(0,4)+(-60:1.4cm)$);
            \node[left] at (0,0) {$v_0$};
            \node[left] at (0,4) {$v_k$};
            \node[below] at (1,0) {$v_0+\overline{e_1}$};
            \node[left] at (1,2) {$v_{k-1}$};
            \node[above right,inner sep=0pt] at (2,2) {$v_{k-1}+\overline{e_1}$};
            \drawgrid{4}
        \end{tikzpicture}
        \caption{The construction used in the proof of \Cref{prop:boundary_edges}.}
        \label{fig:enlarge_along_edge}
    \end{figure}
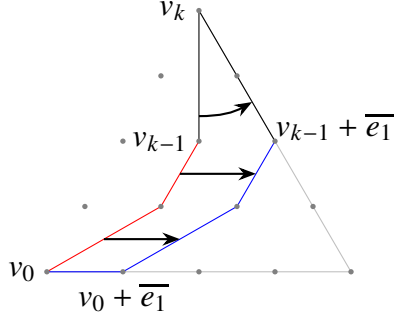
    
    Repeating this procedure for the other two edges of $n\cdot\Delta$ if necessary, we may assume that $Q$ intersects each edge of $n\cdot\Delta$ in an edge. Assume one of these edges has a lattice length $k>1$ and let $v_0$ and $v_1=v_0+k\cdot \overline{e_i}$ denote the vertices of this edge. Simply replacing $v_1$ by $v_0+\overline{e_i}$ then yields a polytope 
    \[
        Q'= \conv(Q^{(0)}\setminus\{v_1\},v_0+\overline{e_i})
    \]
    with the same number of vertices with the considered edge replaced by one of lattice length $1$. Repeating this for all edges of $n\cdot \Delta$ we get a polytope with the desired properties.
\end{proof}

\begin{lemma}\label{lem:boundary_lattice_points}
    Given a lattice polytope $P\subset V$, there is a lattice polytope $P'\subset P$ with the same number of vertices such that all lattice points on the boundary of $P'$ are vertices of $P'$.
\end{lemma}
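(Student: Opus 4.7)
The plan is to prove this by induction on $2\cdot\operatorname{Area}(P) \in \Z_{\geq 0}$. The zero- and one-dimensional cases are immediate: take $P' = P$ if $P$ is a point, and take a primitive sub-segment $\conv(v, v+a) \subseteq P$ if $P$ is a segment with vertex $v$ and primitive direction $a$. Now assume $P$ is two-dimensional with vertices $v_1, \ldots, v_k$ in cyclic order. If every boundary lattice point is already a vertex, set $P' = P$. Otherwise some edge $e_i = \conv(v_i, v_{i+1})$ has lattice length $\lambda_i \geq 2$; let $a_i = (v_{i+1} - v_i)/\lambda_i$ be its primitive direction.

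The idea is to shave off a sliver near $v_{i+1}$ by setting
\[
    \tilde P = \conv\bigl(v_1, \ldots, v_i,\; v_{i+1} - a_i,\; v_{i+2}, \ldots, v_k\bigr),
\]
which satisfies $\tilde P \subseteq P$ because $v_{i+1} - a_i$ lies on $e_i$. The region $P \setminus \tilde P$ is the triangle $\conv(v_{i+1} - a_i, v_{i+1}, v_{i+2})$, whose area equals
\[
    \tfrac{1}{2}\bigl\lvert\det\bigl(a_i,\; \lambda_{i+1} a_{i+1} + a_i\bigr)\bigr\rvert = \tfrac{\lambda_{i+1}}{2}\lvert\det(a_i, a_{i+1})\rvert \geq \tfrac{1}{2},
\]
since $a_i$ and $a_{i+1}$ are non-parallel primitive lattice vectors. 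Thus $2\cdot\operatorname{Area}$ drops by at least one, and the inductive hypothesis applied to $\tilde P$ yields the desired $P' \subseteq \tilde P \subseteq P$ — provided that $f_0(\tilde P) = k$.

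The main obstacle is precisely this last proviso: checking that $\tilde P$ still has exactly $k$ vertices. For $j \neq i+1$, each $v_j$ remains extremal in $\tilde P$ because it is extremal in $P \supseteq \tilde P$. The delicate part is showing that the shaved point $v_{i+1} - a_i$ is also a vertex of $\tilde P$. Writing $v_{i+1} - a_i = \tfrac{1}{\lambda_i} v_i + \bigl(1 - \tfrac{1}{\lambda_i}\bigr) v_{i+1}$, the assumption $\lambda_i \geq 2$ places this convex combination strictly on the $v_{i+1}$-side of the line through $v_i$ and $v_{i+2}$. Consequently it lies in neither $\conv(\{v_j\}_{j \neq i+1})$ nor on the segment $\conv(v_i, v_{i+2})$, and is distinct from every $v_j$, so $\tilde P$ is genuinely two-dimensional with exactly $k$ vertices, closing the induction.
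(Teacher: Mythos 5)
Your proof is correct and follows essentially the same route as the paper: both clip off one vertex adjacent to a non-primitive edge by replacing it with a lattice point in that edge's relative interior (your $v_{i+1}\mapsto v_{i+1}-a_i$ versus the paper's $v_i\mapsto x$ for an arbitrary interior lattice point $x$ of the edge), and both verify that this preserves the vertex count before iterating. The only difference is the terminating monovariant --- you induct on the normalized area, while the paper observes that the number of lattice points in the polytope strictly decreases; both work equally well.
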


\begin{proof}
    The statement obviously holds for the special case of $f_0(P)=2$, we thus assume $f_0(P)\ge3$.
    
    \begin{figure}
        \begin{tikzpicture}
            \begin{scope}
                \clip (0,-1) rectangle (3,3.5);
                \fill[shaded] (0,3) -- (0,1) -- (3,0) -- (4,0) -- (4,1) -- cycle;
                \draw (0,3) -- (0,1) -- (1,0) -- (4,0) -- (4,1);
                \node[below left] at (0,1) {$v_{i-1}$};
                \node[below] at (1,0) {$v_i$};
                \node[below] at (3,0) {$x$};
                \node[below] at (4,0) {$v_{i+1}$};
                \draw[thick] (-0.5,1.166667) -- (3.5,-0.166667);
                \node[below] at (1,0.666667) {$L$};
                \node[above right] at (1,1) {$P\cap H^+$};
                \drawgrid{5}
            \end{scope}
        \end{tikzpicture}
        \caption{The construction used in the proof of \Cref{lem:boundary_lattice_points}.}
        \label{fig:cut_edge}
    \end{figure}
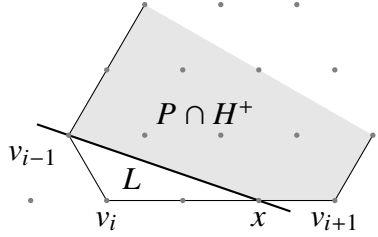

    We shall use the following notation for open and closed line segments:
    \begin{align*}
        (x, y) &:= \{ x + t (y-x) \in V : t \in (0,1) \}\text{ and}\\
        [x, y] &:= \{ x + t (y-x) \in V : t \in [0,1] \}.
    \end{align*}

    Denote the vertices of $P$ by $v_1,\ldots,v_m$ such that $[v_i,v_{i+1}]$ for $i=1,\ldots,m$ are the facets of $P$, where indices are treated modulo $m$. Assume there is a lattice point $x \in (v_i,v_{i+1})$ in the interior of one of these facets. Denote the line through $x$ and $v_{i-1}$ by $L$ as indicated in \Cref{fig:cut_edge}. Note that $v_i\notin L$, as otherwise $\conv(v_{i-1},v_i,v_{i+1})\subset L$, contradicting that $v_{i-1}$, $v_i$ and $v_{i+1}$ are vertices of $P$. We denote the closed half space with boundary $L$ not containing $v_i$ by $H^+$ and set
    \begin{equation*}
        P'=P\cap H^+=\conv(v_1,\ldots,v_{i-1},x,v_{i+1},\ldots,v_m).
    \end{equation*}
    Hence, $P'$ is a lattice polytope with $f_0(P')=f_0(P)$, because for $j\neq i$ we have $v_j\in H^+$, and thus these $v_j$ and $x$ are the vertices of $P'$. Finally, $P'$ contains fewer lattice points than $P$---in particular, $v_i\in P\setminus P'$. Iterating this procedure terminates after finitely many steps, as $P$ only contains finitely many lattice points. We finally obtain a polytope with the same number of vertices as $P$ and no further lattice points on the boundary.
\end{proof}

Applying \Cref{lem:boundary_lattice_points} to our situation, we immediately obtain the following.

\begin{corollary}
    For every $n\in\N$ there is an $n$-vertex-maximal polytope $P$ such that all lattice points on its boundary are vertices. \qed
\end{corollary}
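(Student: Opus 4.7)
The plan is to invoke \Cref{lem:boundary_lattice_points} directly on any $n$-vertex-maximal polytope. Concretely, I would start by fixing an $n$-vertex-maximal polytope $[Q]$ and choosing a representative $Q\subset n\cdot\Delta$, which exists because $n([Q])\le n$ by $n$-vertex-maximality. Applying the lemma to $Q$ produces a lattice polytope $P\subset Q$ with the same number of vertices as $Q$ and with every boundary lattice point of $P$ being a vertex.

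Next I would verify that $P$ is still $n$-vertex-maximal. Since $P\subset Q\subset n\cdot\Delta$, the containment $[P]\subset n\cdot[\Delta]$ holds, so $n([P])\le n$. Combined with $f_0([P])=f_0(P)=f_0(Q)=f_0([Q])=A(n)$, this shows $P$ realizes the maximum and is therefore $n$-vertex-maximal. The boundary-lattice-point property is inherited directly from the conclusion of the lemma, so $P$ is the desired polytope and the corollary follows.

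There is no substantial obstacle here: the only subtle point is making sure that passing to a subpolytope of a vertex-maximal polytope cannot decrease vertex-maximality, and this is immediate since $f_0$ is preserved and $n([P])\le n([Q])\le n$. No new construction, no case analysis, and no appeal to results beyond \Cref{lem:boundary_lattice_points} is needed.
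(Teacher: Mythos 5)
Your proposal is correct and matches the paper's approach exactly: the paper states the corollary as an immediate consequence of \Cref{lem:boundary_lattice_points}, and your argument simply spells out the (easy) verification that the subpolytope produced by the lemma remains $n$-vertex-maximal because it is still contained in $n\cdot\Delta$ and has the same number of vertices. Nothing is missing.
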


\begin{remark}
    In the proof of \Cref{lem:boundary_lattice_points}, we can choose the orientation of the vertex labeling in each iteration. Thus, after first applying \Cref{prop:boundary_edges}, we can avoid changing the edges of length 1 on the boundary of $n\cdot\Delta$ if there are at least 3 vertices on every arc, yielding a vertex-maximal lattice polytope with both properties.
\end{remark}

    \section{Applications} \label{sec-related}

The asymptotic behavior of $A(n)$ for more general bounding shapes than only the simplex has been studied in \cite{BP}. When applied to the situation above, \cite[Theorem~1.1]{BP} states that
\begin{equation}\label{eq:asymptotic}
    \lim_{n\rightarrow\infty} \frac{A(n)}{n^{2/3}} = \frac{3\AP(\Delta_0)}{(2\pi)^{2/3}}
\end{equation}
where $\AP(\Delta_0)$ denotes the affine perimeter of the so-called ``limit-shape'' of $\Delta$. The latter is detailed to consist of parabolas joined differentiably at points on the boundary of $\Delta$, as depicted in \Cref{fig:limit_shape}. Its affine perimeter is an integral over its curvature and evaluates to $\AP(\Delta_0)=3$.

\begin{figure}
    \newcommand{\n}{17}
    \begin{tikzpicture}[scale=1/3]
        \draw (0,0) -- (\n,0) -- (0,\n) -- cycle;
        \begin{scope}[y={(0cm,1cm)}]
            \draw[edge]
                (60:\n/2) parabola[parabola height=\n*0.217cm] ++(\n/2,0);
            \draw[edge,rotate around={120:(30:0.866*\n*2/3)}]
                (60:\n/2) parabola[parabola height=\n*0.217cm] ++(\n/2,0);
            \draw[edge,rotate around={-120:(30:0.866*\n*2/3)}]
                (60:\n/2) parabola[parabola height=\n*0.217cm] ++(\n/2,0);
        \end{scope}
        \drawgrid{\n}
        \draw[gray]
            (0,9) node[vertex] {} --
            (1,11) node[vertex] {} --
            (2,12) node[vertex] {} --
            (3,12) node[vertex] {} --
            (5,11) node[vertex] {} --
            (8,9) node[vertex] {} --
            (9,8) node[vertex] {} --
            (11,5) node[vertex] {} --
            (12,3) node[vertex] {} --
            (12,2) node[vertex] {} --
            (11,1) node[vertex] {} --
            (9,0) node[vertex] {} --
            (8,0) node[vertex] {} --
            (5,1) node[vertex] {} --
            (3,2) node[vertex] {} --
            (2,3) node[vertex] {} --
            (1,5) node[vertex] {} --
            (0,8) node[vertex] {} --
            cycle;
    \end{tikzpicture}
    \caption{The limit shape consisting of three parabolas inscribed in $17\cdot\Delta$, containing the unique 17-vertex-maximal polytope $P_{S_{\le 4}}$.}
    \label{fig:limit_shape}
\end{figure}
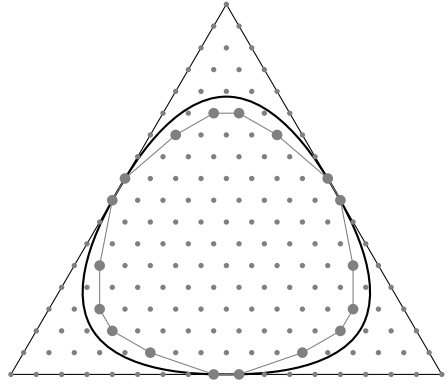

With the help of the sequence of vertex-maximal polytopes $([P_{S_{\le q}}])_{q\in\N}$, one can obtain a combinatorial proof that $AP(\Delta_0)=3$.
We will write $P_q := P_{S_{\le q}}$ for short. Using that
\begin{equation} \label{eq:sumphi}
    \sum_{l=1}^q \varphi(l) = \frac{3\,q^2}{\pi^2} + \Oc(q\log q),
\end{equation}
as shown in \cite{Mertens}, we get
\[ f_0([P_q]) = 3\sum_{l=1}^q \varphi(l) = \frac{9\,q^2}{\pi^2}\ + \Oc(q\log q). \]
For $n([P_q])$, we first rewrite the appearing sum as
\begin{equation*}
    n([P_q]) = \sum_{l=1}^q l\,\varphi(l) = q\sum_{l=1}^q \varphi(l) - \sum_{k=1}^{q-1} \sum_{l=1}^k \varphi(l).
\end{equation*}
Now we can use \cref{eq:sumphi} to obtain
\begin{equation*}
    n([P_q]) = \frac{3\,q^3}{\pi^2} + \Oc(q^2\log q) - \sum_{k=1}^{q-1} \left(\frac{3\,k^2}{\pi^2} + \Oc(k\log k)\right).
\end{equation*}
Finally, using the well known summation formula for summing consecutive squares, we arrive at
\begin{equation*}
    n([P_q]) = 
    \frac{3\,q^3}{\pi^2} + \Oc(q^2\log q) - \left( \frac{q^3}{\pi^2} + \Oc(q^2\log q)\right)
    =
    \frac{2\,q^3}{\pi^2} + \Oc(q^2\log q).
\end{equation*}
We can thus calculate that
\begin{align*}
    \lim_{q\to\infty}\frac{f_0([P_q])^3}{n([P_q])^2}
    &= \frac{9^3}{(2\pi)^2}.
\end{align*}
By taking third roots on both sides, we receive back \cref{eq:asymptotic} with $\AP(\Delta_0)=3$.

\bigskip

Another application of the above results is that we get a bound on the number of rays in a given plane tropical curve in terms of the degree. This number was called $\lambda(d,3)$ in \cite[Corollary~11]{JS} and the first values of this are given in the second row of \cite[Table~1]{JS}. We can now complete an infinite number of entries in this row precisely, and give bounds for the others using \Cref{prop:bounds}.

For the following we use the setting and notation of \cite{GSW}. 
A \emph{plane tropical curve} (in the constant coefficient case) is a pure $1$-dimensional fan $C$ in $V=\Lambda\otimes \R$ together with a multiplicity $\mu(\rho)$ for each $1$-dimensional cone such that if $C$ is pointed we have
\[ \sum_{\rho\in C} \mu(\rho) u_{\rho}=0,\]
where $u_{\rho}$ is the unique primitive generator of $\rho$ with respect to $\Lambda$. If it is pointed, a tropical curve can be expressed by the set $P(C)=\{\mu(\rho_1)u_1,\ldots,\mu(\rho_k)u_k\}\subset \R^{n+1}$, where we choose $u_{i}\in\Z^{3}$ to be the unique representative of $u_{\rho_i}\in \Lambda\otimes \R$ such that the minimum of the coordinates of $u_i$ is $0$.

The \emph{degree} $\deg(C)$ of a tropical curve $C$ in $V$ is the intersection product $C\cdot L$, where $L$ is a generic tropical hyperplane in $V$. Recall that $\deg(C)=d$ if $\sum_{v\in P(C)}v=d\cdot\one$.

We can now give bounds on the number of rays of a plane tropical curve $C$ in terms of its degree (independently of the question of whether $C$ is realizable in a given algebraic plane or not). These bounds are sharp in an infinite number of cases.

\begin{proposition}
Let $C$ be a tropical curve of degree $d$ in $V$. Then $C$ has at most $A(d)$ rays, where $A(d)$ can be computed or bounded as in \Cref{prop:bounds}.
\end{proposition}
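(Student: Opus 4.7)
The plan is to associate to the tropical curve $C$ a lattice polytope $N\subset V^*$ whose vertices are in bijection with the rays of $C$, and then show that $N$ fits (up to translation) into $d$ times a unimodular $2$-simplex of $V^*$, so that the bound $A(d)$ applies directly.

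First I would collect the rays of $C$ into the balanced lattice vector configuration $T := \{\mu_i\,\overline{u_i}\}\subset\Lambda$, where $\overline{u_i}\in\Lambda$ is the primitive direction of the $i$-th ray with multiplicity $\mu_i$. The bijection $D$ from \Cref{sec-polytopeval}, applied with the roles of $V$ and $V^*$ interchanged (which is valid by the final remark of \Cref{sec-polytopeval}), yields a unique translation class $[N]$ of lattice polytopes in $V^*$ with $D([N]) = T$; in particular $f_0([N])$ equals the number of rays of $C$.

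Next I would convert the degree condition into a statement about the simplicial diameter of $[N]$. In $V$ set $K' := \conv(\overline{e_1},\overline{e_2},\overline{e_3})$; this triangle contains $0$ in its interior because $\overline{e_1}+\overline{e_2}+\overline{e_3}=0$, so its Minkowski functional is an asymmetric norm $\llrr$ on $V$. A short direct computation shows that for $\overline{u}\in\Lambda$ with representative $u\in\Z_{\ge 0}^3$ satisfying $\min_j u_j=0$ one has $\norm{\overline{u}} = u_1+u_2+u_3$; consequently, the degree relation $\sum_i\mu_i u_i = d\cdot\one$ becomes exactly $\sum_{w\in T}\norm{w}=3d$. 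Taking $\Delta'\subset V^*$ to be the unique (up to translation) unimodular $2$-simplex with $D([\Delta']) = \{\overline{e_1},\overline{e_2},\overline{e_3}\}$ — explicitly $\Delta' = \conv(0,\,e_1-e_2,\,e_1-e_3)$ — we have $K' = \conv(D([\Delta']))$, so the dual version of \Cref{thm:sum-equiv} yields $n([N]) = \tfrac{1}{3}\sum_{w\in T}\norm{w} = d$. Thus $[N]\subset d\cdot[\Delta']$, and hence $f_0([N]) \le A(d)$, the final identification using that any two unimodular $2$-simplices in a rank-$2$ lattice are related by an affine unimodular transformation so that $A_{\Delta'}(d) = A_\Delta(d) = A(d)$.

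The main obstacle I anticipate is conceptual rather than computational: one has to recognise that the rays of a plane tropical curve naturally form a balanced configuration in $\Lambda$ (not $\Lambda^*$), so the framework of \Cref{sec-polytopeval} must be applied in its dual incarnation, yielding a Newton-style polytope in $V^*$. The only genuine calculation is the identity $\norm{\overline{u}} = u_1+u_2+u_3$ for the minimum-coordinate-zero lift, which is precisely what makes the factor of $3$ arising from $\one\cdot\one = 3$ in the degree equation match the factor of $\tfrac{1}{3}$ appearing in \Cref{thm:sum-equiv}.
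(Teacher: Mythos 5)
Your proof is correct, but it takes a genuinely different route from the paper. The paper simply invokes the Newton polytope $\Newt(C)\subset V$ of \cite[Definition~4.13]{GSW} together with \cite[Lemma~4.14]{GSW}, which states that $\Newt(C)\subset d\cdot\Delta$ for a degree-$d$ curve; since the edges of $\Newt(C)$ correspond to the rays of $C$ and a planar polytope has as many edges as vertices, the bound $A(d)$ follows in two lines. You instead rebuild this polytope internally: you read off the balanced configuration $T=\{\mu_i\overline{u_i}\}\subset\Lambda$ from the rays, apply the Minkowski correspondence of \Cref{sec-polytopeval} in its dual incarnation to get $[N]\subset V^*$ with $f_0([N])=|T|$, and then use the dual of \Cref{thm:sum-equiv} plus the computation $\norm{\overline{u}}=u_1+u_2+u_3$ for the minimum-zero lift to show $n([N])=\tfrac13\sum_{w\in T}\norm{w}=d$. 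This is a valid and rather satisfying alternative: it makes transparent \emph{why} the degree coincides with the simplicial diameter of the associated polytope (it is exactly the valuation formula evaluated on the dual side, with the factor $3$ coming from $\one$ having coordinate sum $3$), and it avoids citing \cite{GSW} for the containment. The cost is that you must justify the dualization yourself: the closing remark of \Cref{sec-polytopeval} that you cite actually concerns the Euclidean generalization, not the swap $\Lambda\leftrightarrow\Lambda^*$, though that swap is immediate since $(\Lambda^*)^*\cong\Lambda$ and \Cref{lem:norm-linear} applies verbatim to $K'=\conv(\overline{e_1},\overline{e_2},\overline{e_3})$. Two small points to make explicit: $T$ is a genuine vector configuration because distinct rays of a fan have distinct directions, and your $\Delta'=\conv(0,e_1-e_2,e_1-e_3)$ is indeed unimodular because $e_1-e_2$, $e_1-e_3$ form a lattice basis of $\Lambda^*$ (all edges having lattice length $1$ would not by itself suffice).
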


\begin{proof}
Let $\Newt(C)\subset V\cong\R^3/\R\one$ be the Newton polytope of $C$ as defined in \cite[Definition~4.13]{GSW}. By \cite[Lemma~4.14]{GSW} we have $\Newt(C)\subset d\cdot\Delta$, and $\Newt(C)$ touches all three sides of $d\cdot\Delta$. By definition of $A(d)$, the polytope $\Newt(C)$ has at most $A(d)$ vertices, and hence, at most $A(d)$ edges. As the edges of $\Newt(C)$ correspond to the rays of $C$, the claim follows.
\end{proof}

Note that this result, however, does not immediately generalize to constant-coefficient tropical curves in general, even if the curve is contained in a matroid fan of a loop-free matroid of rank $3$ (and could then also be called \emph{plane}, as for example in \cite{GSW}):

\begin{example}
Let $n\in\N$, $n\geq1$ and denote by $e_i$ the $i$-th standard basis vector in $\R^{n}$.
\begin{enumerate}
\item If $P(C)=\{\overline{e_1},\ldots,\overline{e_n}\}\subset \R^{n}/\R\one$, then the tropical curve $C$ has $n$ rays and is contained in every matroid fan of a loop-free rank $3$ matroid on $\{1,\ldots,n\}$. However, we have $\deg(C)=1$, so $A(1)=3$ is not an upper bound for the number of rays when $n\geq3$.

\item If $P(C)=\{\overline{e_i} : i=1,\ldots,n\} \cup \{\overline{e_i}+\overline{e_j} : i,j=1,\ldots,n,\ i\neq j\}\subset \R^{n}/\R\one$, then $C$ is a curve contained in the matroid fan $L^{n-1}_2$ and it has $$n+\binom{n}{2}=\frac{n(n+1)}{2}$$ rays. Moreover, $\deg(C)=n$. But $A(n)<\frac{n(n+1)}{2}$ for every $n\geq4$.
\end{enumerate}
\end{example}

    \section*{Acknowledgments}

We would like to thank Masahiko Yoshinaga and Raman Sanyal for fruitful discussions on the subject of this note.

    \bibliographystyle{amsalpha}
    \bibliography{references}
\end{document}